\def\BState{\State\hskip-\ALG@thistlm}
\DeclareMathOperator{\aut}{Aut}
\DeclareMathOperator{\pg}{pg}
\DeclareMathOperator{\comp}{Comp}
\DeclareMathOperator{\scc}{sc}
\newtheorem{theorem}{Theorem}
\newtheorem{prop}{Proposition}
\newtheorem{lemma}{Lemma}
\begin{document}
\sloppy
\title{There is no $(75,32,10,16)$ strongly regular graph} 

\author{
Jernej Azarija $^{a}$
\and
Tilen Marc $^{a}$
}

\date{\today}

\maketitle
\begin{center}
$^a$ Institute of Mathematics, Physics and Mechanics, Ljubljana, Slovenia \\
{\tt jernej.azarija@gmail.com}
\\
{\tt tilen.marc@imfm.si}
\medskip
\end{center}

\begin{abstract}
We show that there is no $(75,32,10,16)$ strongly regular graph. The result is obtained  by a mix of  algebraic and computational  approaches. The main idea is to build large enough induced structure and apply the star complement technique. Our result implies that there is no regular two-graph on 76 vertices and no partial geometry with parameters $\pg(4,7,2)$. In particular, it implies that there is no $(76,35,18,14)$ strongly-regular graph. In order to solve this classification problem we also develop an efficient algorithm for the problem of finding a maximal clique in a graph.
\end{abstract}

\section{Introduction}

A popular notion in algebraic graph theory is the concept of a {\em strongly regular} graph. We say that a $k$-regular graph of order $v$ and diameter $2$ is strongly regular with parameters $(v,k,\lambda, \mu)$ if any pair of non-adjacent vertices has precisely $\lambda$ common neighbors while two non-adjacent vertices share $\mu$ common neighbors. 

A fundamental question about strongly regular graphs is for which parameters does a strongly regular graph exist? For example the notorious question about the existence of a graph of order 3250, girth 5, and diameter 2 (also known as Moore graph) asks for the existence of a strongly regular graph (from now on SRG) with parameters $(3250, 57, 0, 1)$. There are essentially four general ways to rule out certain parameters for SRG's from being realizable. An easy double-counting argument gives us the condition that $$(v-k-1) \mu = k (k - \lambda -1)\,. $$ The second condition requires that the numbers $$\frac{1}{2} \left[(v-1)\pm\frac{2k+(v-1)(\lambda-\mu)}{\sqrt{(\lambda-\mu)^2 + 4(k-\mu)}}\right]\,,$$ are integers, and comes from counting the multiplicities of the eigenvalues of a SRG.

Finally one way to rule out certain SRG is through the so-called Krein and absolute bounds see \cite[pp. 231]{Godsil}. All the described criterion's still leave room for many parameters for which it is not known whether there exists such a SRG. The state of affairs for all possible parameters on up to $1300$ vertices is tracked by Brouwer on his web site \cite{Brower}. It can be seen that on up to $100$ vertices there are essentially $15$ parameters whose classification is still open, the smallest three being $(65,32,15,16), (69,20,7,5)$, and $(75,32,10,16)$. We found the parameters of the last one the most intriguing since the existence of the SRG with $(75,32,10,16)$  is connected to the existence of certain so called two-graphs, and referred in \cite[pp. 263]{Godsil} as one of the oldest open problems in this topic. Moreover, it is also connected to the existence of certain partial geometries.

Given that there is no general technique for deciding whether a certain parameter is realizable, a lot of effort has been put into establishing certain structural results about the missing SRG's. Specifically for a potential SRG $X$ with parameters $(75,32,10,16)$, Haemers and Tonchev \cite{Tonchev} showed in 1996 that the chromatic number of $X$ is at least $6$. Four years later Makhnev showed \cite{Makhnev} that $X$  does not contain a $16$-regular subgraph. Recently Behbahani and Lam \cite{Behbahani} also derived some constraints about the structure of the automorphism group of $X$. Particularly, they showed that if $p$ is a prime dividing $|\aut(X)|$, then $p = 2$ or $p = 3$.

In this paper we use the so called star-complement technique \cite{Star} in order to establish that in fact a SRG with parameters $(75,32,10,16)$ does not exist. Moreover, since existence of such a graph is in one to one correspondence with the question of existence of regular two-graph on 76 vertices \cite[pp. 249]{Godsil}, our result implies that there is no regular two-graph with such order. The result has even further consequences: Define a partial geometry $\pg(s,t,\alpha)$, for $s,t,\alpha\geq 1$, as incidence structure $C=(P,L,I)$ consistent of points $P$, lines $L$, and set $I\subset P\times L$ of incidences (we say that $p\in P$ is incident with $l\in L$ if $(p,l)\in I$), such that the following holds. Each pair of points has at most one line incident with both of them. Each line is incident with $s+1$ points, while each point is incident with $t+1$ lines. Finally, if a point $p$ and a line $l$ are not incident, there are exactly $\alpha$ pairs $(q,m)\in I$, such that $p$ is incident with $m$ and $q$ is incident with $l$. The point graph of $\pg(s,t,\alpha)$ is the graph on its points with two points adjacent if they are incident with a common line. This graph is a strongly regular graph \cite{Brower2}. Up to now the existence of $\pg(4,7,2)$ was unknown, but it was known that the point graph of this geometry must be a SGR with parameters $(75,32,10,16)$. Therefore our result shows that also $\pg(4,7,2)$ does not exist.
As an application of Seidel switching one can also deduce that the non-existence of a $(75,32,10,16)$ SRG implies that there is no $(76,35,18,14)$ SRG~\cite{haemers-2}.

The star-complement technique turned out to be a useful tool for re-proving classification results for some SRG's \cite{Milosevic,Stevanovic} although its direct application fails for SRG's with a large number of vertices or valency. The two main drawbacks being the large search space for induced subgraphs and the problem of computing the clique number of some large graphs.

In this paper we address both problems by presenting an application of the interlacing principle that is very effective at pruning the underlying search space as well as an efficient algorithm for finding the clique number. The algorithm for computing the clique number is crucial since computing the clique number of most of our graphs is computationally infeasible for all state of the art programs.

The paper is organized as follows. In the next section we introduce the star-complement technique and the interlacing criterion that we use. In the following two sections we use the star complement technique in order to first establish the clique number of $X$ and finally show that $X$ does not exist. We finish by presenting the obtained results and discussing some of the computational aspects.

\section{Preliminaries} \label{sec:prelim}

\subsection{Star complements}

The idea behind star-complements revolves around the notion of a so called {\em star-complement graph}. Let $G$ be a simple graph of order $n,$ $A_G$ its adjacency matrix and $r$ one of its eigenvalues with multiplicity $f$. We will say that $r$ is an eigenvalue of $G$ whenever we mean that $r$ is an eigenvalue of $A_G$.

An induced subgraph $H \subseteq G$ is called a {\em star-complement for $G$ and eigenvalue $r$} if it has order $n-f$ and $r$ is not an eigenvalue of $H$. As it turns out \cite{Star}, there is a star-complement for every eigenvalue of $G$. For convenience we record this fact in the following proposition.

\begin{prop}
If $G$ is a graph and $r$ an eigenvalue of $G$, then $G$ has a star complement for $r$.
\end{prop}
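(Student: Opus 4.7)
I would prove the proposition by induction on $f := m_G(r)$, the multiplicity of $r$. The base case $f = 0$ is immediate with $H = G$. For the inductive step with $f \geq 1$, the idea is to locate a single vertex $v \in V(G)$ such that $r$ is still an eigenvalue of $G - v$ but with multiplicity exactly $f - 1$. Applying the induction hypothesis to $G - v$ then yields a star complement $H$ of order $(n-1) - (f-1) = n - f$ in which $r$ is not an eigenvalue, and $H$ is still induced in $G$.

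To select the vertex $v$, I would let $P$ be the orthogonal projection of $\mathbb{R}^{V(G)}$ onto the eigenspace $E(r)$ and observe that, since the standard basis $\{e_u\}_{u \in V(G)}$ spans $\mathbb{R}^{V(G)}$, the vectors $\{Pe_u\}$ span $E(r) \neq \{0\}$; thus there exists $v$ with $Pe_v \neq 0$. For the lower bound on $m_{G-v}(r)$, I would consider the subspace $V_1 := \{x \in E(r) : x_v = 0\}$, which is the kernel of the linear functional $x \mapsto x_v = \langle x, e_v\rangle = \langle x, Pe_v\rangle$ on $E(r)$. Since $Pe_v \neq 0$ this functional is not identically zero on $E(r)$, so $\dim V_1 = f - 1$; restricting each $x \in V_1$ to $V(G) \setminus \{v\}$ yields an $r$-eigenvector of $A_{G-v}$, giving $m_{G-v}(r) \geq f - 1$.

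The main obstacle is the matching upper bound $m_{G-v}(r) \leq f - 1$, since Cauchy interlacing alone gives only $m_{G-v}(r) \leq f + 1$. I would argue by contradiction: assume $m_{G-v}(r) \geq f$, pick $f$ linearly independent $r$-eigenvectors of $A_{G-v}$, and extend each by zero at $v$ to obtain $\tilde y^{(1)}, \ldots, \tilde y^{(f)} \in \mathbb{R}^{V(G)}$. By construction $(A_G - rI)\tilde y^{(j)} = c_j\, e_v$ for some scalar $c_j$. If every $c_j = 0$, the $\tilde y^{(j)}$ give $f$ linearly independent vectors inside $V_1$, contradicting $\dim V_1 = f - 1$; if instead some $c_j \neq 0$, then $e_v \in \mathrm{im}(A_G - rI)$, which equals $E(r)^\perp$ by symmetry of $A_G$, forcing $Pe_v = 0$ and again contradicting our choice of $v$. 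This closes the induction and proves the proposition.
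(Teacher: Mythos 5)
Your proof is correct. Note that the paper itself offers no argument for this proposition: it simply records the fact and defers to the star complement literature \cite{Star}, so the relevant comparison is with the standard proof found there. That proof is a one-shot argument: since the vectors $Pe_u$, $u\in V(G)$, span the eigenspace $E(r)$, one chooses a \emph{star set} $S=\{v_1,\dots,v_f\}$ with $Pe_{v_1},\dots,Pe_{v_f}$ a basis of $E(r)$ and shows directly that $r$ is not an eigenvalue of $G-S$. Your induction removes one vertex at a time, and the two halves of your inductive step are precisely the local versions of that global argument: the codimension-one count via the functional $x\mapsto\langle x,Pe_v\rangle$ gives the lower bound, and the dichotomy between ``all $c_j=0$'' (contradicting $\dim V_1=f-1$) and ``some $c_j\neq 0$'' (forcing $e_v\in\operatorname{im}(A_G-rI)=E(r)^{\perp}$, hence $Pe_v=0$) gives the matching upper bound; the symmetry of $A_G$ enters in exactly the same place in both proofs. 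The one-shot version is shorter and exhibits the star set explicitly; your inductive version never needs a full basis of $E(r)$ at once and yields as a byproduct the useful intermediate fact that deleting a vertex $v$ with $Pe_v\neq 0$ drops the multiplicity of $r$ by exactly one. All the details check out: restricting a vector of $V_1$ to $V(G)\setminus\{v\}$ is an $r$-eigenvector of $A_{G-v}$ because its $v$-entry vanishes, extension by zero preserves linear independence, and the two cases in the contradiction are exhaustive.
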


Before explaining the role of star-complements, let us mention that one can construct a star-complement for an eigenvalue $r$ by extending an induced subgraph of $G$ that does not contain $r$ as an eigenvalue \cite[Lemma 3]{Milosevic}. More precisely:

\begin{prop} \label{Prop-SCextend}
Let $G$ be a graph with eigenvalue $r$. If $H'$ is an induced subgraph of $G$ that does not contain $r$ as an eigenvalue then there exist a star complement $H$ for $G$ and eigenvalue $r$ so that $H'$ is an induced subgraph of $H$.
\end{prop}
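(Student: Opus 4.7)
The plan is to extend $V(H')$ by a carefully chosen subset $X \subseteq V(G)\setminus V(H')$ of size $f := \dim \mathcal{E}(r)$ so that $H := G - X$ is the desired star complement, where $\mathcal{E}(r) \subseteq \mathbb{R}^n$ is the $r$-eigenspace of $A_G$ and $n = |V(G)|$. A first sanity check uses Cauchy interlacing: removing $n - |V(H')|$ vertices from $G$ can decrease the multiplicity of $r$ by at most that many, so the assumption that $r$ is not an eigenvalue of $H'$ forces $|V(H')| \le n - f$, leaving at least $f$ vertices available outside $V(H')$.

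Next I would analyse the restriction map $\pi \colon \mathcal{E}(r) \to \mathbb{R}^{V(G)\setminus V(H')}$. It is injective: any $v \in \mathcal{E}(r)$ supported on $V(H')$ restricts to an $r$-eigenvector of $A_{H'}$ (adjacency sums at vertices of $V(H')$ ignore the zero entries outside $V(H')$), which by hypothesis must vanish, so $v = 0$. Hence $W := \pi(\mathcal{E}(r))$ is an $f$-dimensional subspace of $\mathbb{R}^{V(G)\setminus V(H')}$. Picking any basis of $W$ and arranging it as the columns of a matrix with rows indexed by $V(G) \setminus V(H')$, I can select $f$ linearly independent rows; let $X$ be their index set. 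The resulting coordinate projection $W \to \mathbb{R}^X$ is then a bijection, so composing with $\pi$ yields $\mathcal{E}(r) \cap \mathbb{R}^{V(G)\setminus X} = \{0\}$ while $|X| = f$.

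To conclude, I would invoke the standard characterisation of star sets from the star complement literature \cite{Star}: a subset $X$ with $|X| = f$ is a star set for $r$ exactly when $\mathcal{E}(r) \cap \mathbb{R}^{V(G) \setminus X} = \{0\}$. One direction is immediate (an $r$-eigenvector of $G - X$ extends by zero on $X$ to lie in $\mathcal{E}(r) \cap \mathbb{R}^{V(G)\setminus X}$ after the usual adjustment), while the other follows from a rank argument for $A_G - rI$ via its spectral decomposition. Applied to our $X$, this yields that $H := G - X$ does not have $r$ as an eigenvalue and has order $n - f$, so $H$ is a star complement for $r$; since $X$ is disjoint from $V(H')$, the subgraph $H'$ is induced in $H$, as required.

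The only nontrivial step is invoking (or re-deriving) the last linear-algebra equivalence; everything else is a short dimension count plus the injectivity of the projection $\pi$. I expect that minor care will be needed to verify the extension-by-zero argument cleanly, but this is well trodden ground in the star complement framework, so the real content of the proof is the explicit construction of $X$ via a rank-selection inside $W$.
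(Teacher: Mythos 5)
Your proposal is correct, but it is worth noting that the paper does not actually prove this proposition at all: it is stated with a bare citation to \cite[Lemma 3]{Milosevic}, so there is no in-paper argument to compare against. Your construction is the standard one behind that cited lemma and it is sound: the interlacing bound $|V(H')|\le n-f$, the injectivity of the restriction $\pi\colon\mathcal{E}(r)\to\mathbb{R}^{V(G)\setminus V(H')}$ (a vector of $\mathcal{E}(r)$ vanishing outside $V(H')$ restricts to an $r$-eigenvector of $A_{H'}$, hence vanishes), and the selection of $f$ linearly independent rows to get a star set $X$ disjoint from $V(H')$ all check out. Two small remarks. First, once you have the characterisation ``$X$ is a star set iff $|X|=f$ and $\mathcal{E}(r)\cap\mathbb{R}^{V(G)\setminus X}=\{0\}$'', the interlacing sanity check is redundant (the existence of $f$ independent rows already forces enough vertices outside $V(H')$), though it is harmless. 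Second, your parenthetical justification of the ``immediate'' direction is slightly off: extending an $r$-eigenvector of $G-X$ by zero does \emph{not} in general land in $\mathcal{E}(r)$, and the genuinely immediate direction is the reverse one (a vector of $\mathcal{E}(r)\cap\mathbb{R}^{V(G)\setminus X}$ \emph{restricts} to an $r$-eigenvector of $G-X$). The direction your proof actually needs --- that $|X|=f$ together with $\mathcal{E}(r)\cap\mathbb{R}^{V(G)\setminus X}=\{0\}$ forces $r\notin\operatorname{spec}(G-X)$ --- is the nontrivial one, proved by observing that for $y\in\ker(A_{G-X}-rI)$ the vector $(A_G-rI)\tilde y$ is supported on $X$ and orthogonal to $\mathcal{E}(r)$, hence zero because the projections $Pe_v$, $v\in X$, are independent. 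Since you delegate this equivalence to the literature rather than to your sketch, the proof stands; just be aware that the $|X|=f$ hypothesis is essential there (the implication fails for larger $X$).
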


The main motivation of star-complements is that they in some way allow us to reconstruct $G$. The reader can find the precise implications in \cite[pp. 150]{Star}, in this paper we shall formulate the theory to suit the needs of our application. 

Let $H$ be a star-complement for $G$ with eigenvalue $r$ and define the inner product 
$$\langle u, v \rangle = u (rI - A_H)^{-1} v^t\,.$$ 

The {\em compatibility graph} of $H$ and $r$ denoted by $\comp(H,r)$ is the graph with vertex set 
$$V(\comp(H, r)) = \{  u \in \{0,1\}^{n-f} \mid \langle u, u \rangle = r \mbox{ and } \langle u, \overrightarrow{1} \rangle = -1\}\,, $$ 

and adjacency defined as $$u \sim v \iff \langle u, v \rangle \in \{-1,0\}\,.$$ Let us remark that the condition that the inner product of a vertex of $\comp(H,r)$ and the all-ones vector is $-1$ does not hold in general but only if we assume that $G$ is a regular graph, see \cite{Row}.

As it turns out, the problem of constructing $G$ is reduced to the problem of finding cliques in $\comp(H,r)$. Specifically

\begin{prop}
If $r$ is an eigenvalue of $G$ with multiplicity $f$, $H$ a star complement for $G$ and $r$, then  $\comp(H,r)$ has a $f$-clique.
\end{prop}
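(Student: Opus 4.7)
The plan is to exhibit an explicit $f$-clique in $\comp(H,r)$ using the $f$ vertices of $G$ outside $H$. Write $X = V(G) \setminus V(H)$, so $|X| = f$, and decompose the adjacency matrix in block form
$$A_G = \begin{pmatrix} A_X & B^t \\ B & A_H \end{pmatrix},$$
where rows/columns are indexed by $X$ and $V(H)$, and the column $b_x$ of $B$ corresponding to $x \in X$ is the characteristic vector of $N(x) \cap V(H)$. For each such $x$ set $u_x = b_x^t$ (a row vector in $\{0,1\}^{n-f}$). The goal is to show that every $u_x$ is a vertex of $\comp(H,r)$ and that all pairs $u_x, u_y$ are adjacent.

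First I would use that $H$ is a star complement to produce a convenient basis of the $r$-eigenspace. Since $r$ is not an eigenvalue of $H$, standard facts about star complements give that the eigenspace $\mathcal{E}(r)$ is spanned by vectors of the form $\binom{e_x}{C_x}$, $x \in X$, where $e_x$ is the standard basis vector on $X$ and $C_x$ is determined by the eigenvector equation restricted to the $V(H)$ block, namely $(rI - A_H)C_x = b_x$, i.e.\ $C_x = (rI - A_H)^{-1} b_x$. This step uses the invertibility of $rI - A_H$, which is exactly the star-complement hypothesis.

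Next I would read off the required identities by plugging these eigenvectors back into the $X$-block of the eigenvector equation, $A_X e_x + B^t C_x = r e_x$. The $y$-th coordinate of this equation is $(A_X)_{yx} + \langle u_y, u_x \rangle = r\,\delta_{yx}$ (using that $(rI-A_H)^{-1}$ is symmetric). Taking $y = x$ yields $\langle u_x, u_x \rangle = r$, and taking $y \neq x$ yields $\langle u_y, u_x \rangle = -(A_X)_{yx} \in \{-1, 0\}$. Hence every pair $u_x, u_y$ is adjacent in $\comp(H,r)$.

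Finally I would verify the normalization $\langle u_x, \overrightarrow{1} \rangle = -1$, which is the only place regularity of $G$ enters (recall this is built into the definition of $\comp(H,r)$ because $G$ is assumed regular). Since $G$ is $k$-regular and $r \neq k$, the all-ones vector $\overrightarrow{1}$ is orthogonal to $\mathcal{E}(r)$; applied to the eigenvector $\binom{e_x}{C_x}$ this gives $1 + \overrightarrow{1}^{\,t} C_x = 0$, i.e.\ $\overrightarrow{1}^{\,t}(rI - A_H)^{-1} b_x = -1$, which is exactly $\langle u_x, \overrightarrow{1} \rangle = -1$. The $u_x$ need not be distinct a priori, but distinctness follows from the linear independence of the eigenvectors $\binom{e_x}{C_x}$ together with the formula $C_x = (rI - A_H)^{-1} b_x$. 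Thus $\{u_x : x \in X\}$ is an $f$-clique in $\comp(H,r)$. The only conceptual subtlety, and what I would regard as the main step, is the initial structural fact that the $r$-eigenspace admits a basis indexed by $X$ with prescribed $X$-coordinates; everything else is a short manipulation of the block eigenvector equation.
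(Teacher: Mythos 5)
Your argument is correct in its main line and is essentially the standard proof of the reconstruction theorem from the star-complement literature; the paper itself does not prove this proposition at all (it cites \cite{Star}), so there is no in-paper proof to compare against. Your block decomposition, the identification $C_x=(rI-A_H)^{-1}b_x$ via invertibility of $rI-A_H$, the resulting identity $(A_X)_{yx}+\langle u_y,u_x\rangle=r\,\delta_{yx}$, and the orthogonality argument giving $\langle u_x,\overrightarrow{1}\rangle=-1$ for regular $G$ are all right.

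The one genuine flaw is your justification of distinctness. Linear independence of the eigenvectors $\binom{e_x}{C_x}$ is automatic from the independence of the top blocks $e_x$ and says nothing about whether $b_x=b_y$ can occur for $x\neq y$; if it did, your set $\{u_x\}$ would have fewer than $f$ elements and you would not get an $f$-clique. The correct argument is already contained in the identities you derived: if $u_x=u_y$ with $x\neq y$, then $r=\langle u_x,u_x\rangle=\langle u_x,u_y\rangle=-(A_X)_{xy}\in\{-1,0\}$, which is impossible provided $r\notin\{-1,0\}$. That hypothesis is implicit in the proposition as used here (it is standard in the star-complement method, and in this paper $r=2$), but without it the statement can genuinely fail, so it should be invoked explicitly; the same observation rules out the degenerate case $u_x=0$, which would force $r=0$.
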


This already sets the general idea behind the application of the star complement technique. Suppose $G$ is a SRG with parameters $(v,k,\lambda, \mu)$ and $r$ an eigenvalue of $G$ with (large) multiplicity $f$. Suppose that we know that $H'$ is a induced subgraph of $G$ and does not have $r$ as an eigenvalue. If $H'$ is large enough we can compute its compatibility graph and check for $f$-cliques. If the obtained graph does not have such a clique then $G$ does not exist.

In most cases we cannot directly find an induced subgraph $H'$ large enough to be a star complement. In that case, by Proposition \ref{Prop-SCextend}, we can extend $H'$ in all possible ways to obtain candidates for a star-complement of $G$ and $r$. Depending on how large $H'$ is, we may obtain a large set of candidates, and for each such candidate $H$ we need to compute the respective clique number of $\comp(H, r)$. 

The set of all possible candidates for star-complements gets large very quickly and hence we need an efficient pruning method that we explain in the next subsection.

\subsection{Interlacing}

If $n > m$ and  $\lambda_1 \ge \cdots \ge \lambda_n$, $\mu_1 \ge \cdots \ge \mu_m$ are two sequences of real numbers, we say that $\{\mu_i\}_{i=1}^m$ {\em interlaces} $\{\lambda_i\}_{i=1}^n$ if $$\lambda_i \ge \mu_i \ge \lambda_{n-m+i} \quad \mbox{for} \quad i \in \{1,\ldots,m\}\,.$$

The well-known interlacing principle states that the eigenvalues of an induced subgraph of $G$ interlace the eigenvalues of $G$. As it turns out this is not a very effective condition for pruning the induced subgraphs obtained in our application. A much more effective criterion is to used the `partitioned' version of the interlacing principle \cite[Cor. 2.3]{Heamers} which we state as follows. Suppose $\mathcal{V} = (\mathcal{V}_1,\ldots,\mathcal{V}_k)$ is a partition of the vertices of $G$. Let $e(\mathcal{V}_i,\mathcal{V}_j)$ denote the number of edges between the vertices of $\mathcal{V}_i$ and $\mathcal{V}_j$ if $i \ne j$, and the number of edges in the graph induced by $V_i$ otherwise. Consider the $k \times k$ matrix $A_{\mathcal{V}} = (a_{i,j})_{i,j=1}^k$ where 
$$a_{i,j} = \left \{ 
\begin{array}{ll}
    \frac{e(\mathcal{V}_i,\mathcal{V}_j)}{|\mathcal{V}_i|} & \quad \mbox{if} \quad i \ne j \\
    \frac{2e(\mathcal{V}_i)}{|\mathcal{V}_i|} & \quad \mbox{if} \quad  i = j
\end{array}
    \right.\,.
    $$

As it turns out, the eigenvalues of $A_{\mathcal{V}}$ interlace the eigenvalues of $G$. 

\begin{prop}\label{Prop-PartitionedAM}
Let $G$ be a graph and $\mathcal{V}$ a partition of its vertices. Then, the eigenvalues of $G$ are interlaced by the eigenvalues of $A_{\mathcal{V}}$.
\end{prop}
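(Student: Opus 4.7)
The plan is to realize the (non-symmetric) quotient matrix $A_{\mathcal V}$ as the matrix, written in a non-orthonormal basis, of an orthogonal compression of $A_G$, and then appeal to the standard Cauchy-type interlacing theorem for compressions. To set this up I would introduce the characteristic matrix $S \in \{0,1\}^{n \times k}$ of the partition, defined by $S_{v,i} = 1$ if and only if $v \in \mathcal V_i$. Direct counting gives $S^T S = \operatorname{diag}(|\mathcal V_1|, \ldots, |\mathcal V_k|)$, while $(S^T A_G S)_{i,j}$ equals $e(\mathcal V_i, \mathcal V_j)$ for $i \ne j$ and $2 e(\mathcal V_i)$ for $i = j$. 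Comparing with the definition of $a_{i,j}$ one reads off
\[
A_{\mathcal V} = (S^T S)^{-1} S^T A_G S.
\]

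The next step is to symmetrize. Setting $D = (S^T S)^{1/2}$ and $\tilde S = S D^{-1}$, the columns of $S$ have pairwise disjoint supports, so $\tilde S^T \tilde S = I_k$, i.e.\ $\tilde S$ has orthonormal columns. A short manipulation yields
\[
D\, A_{\mathcal V}\, D^{-1} = D^{-1} (S^T A_G S) D^{-1} = \tilde S^T A_G \tilde S =: B,
\]
so $A_{\mathcal V}$ and the real symmetric matrix $B$ are similar and share the same real spectrum.

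It then remains to invoke Poincar\'e's separation theorem: if $M$ is a real symmetric $n \times n$ matrix and $U$ is $n \times m$ with orthonormal columns, then the eigenvalues of $U^T M U$ interlace those of $M$. This is immediate from the Courant--Fischer min-max characterization applied to both $M$ and $U^T M U$, using that $U$ identifies $\mathbb{R}^m$ isometrically with its column space and hence sends $i$-dimensional subspaces to $i$-dimensional subspaces without distorting Rayleigh quotients. Taking $M = A_G$ and $U = \tilde S$ gives the desired interlacing for $B$, and hence for $A_{\mathcal V}$. I expect no serious obstacle: the only step that requires any thought is the similarity $A_{\mathcal V} \sim B$, since the quotient matrix is asymmetric as written, but rescaling by $D = (S^T S)^{1/2}$ is the standard fix, and beyond that the argument is essentially bookkeeping around a classical result (Haemers).
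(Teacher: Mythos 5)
Your proof is correct and is precisely the standard argument behind the result the paper cites rather than proves (Haemers, Cor.~2.3): writing $A_{\mathcal V}=(S^TS)^{-1}S^TA_GS$, conjugating by $D=(S^TS)^{1/2}$ to obtain the symmetric compression $\tilde S^TA_G\tilde S$, and applying Cauchy--Poincar\'e interlacing. Nothing further is needed.
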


Given that we want to decide whether a graph $H$ with vertex set $\{v_1,\ldots,v_k\}$ is an induced subgraph of $G$ we consider the partition $$\mathcal{V} = (\{v_1\},\ldots, \{v_k\}, V(G) \setminus V(H))\,.$$ Since in our scenario $G$ is regular, we can always compute the number of edges in $V(G) \setminus V(H)$ as well as between the vertices of $V(H)$ and $V(G) \setminus V(H)$. We call the matrix that is obtained from $H$ by using this partition {\em the partitioned adjacency matrix} of $H$.

From Proposition \ref{Prop-PartitionedAM} we have that the eigenvalues of the partitioned matrix of $H$ must interlace the eigenvalues of $G$. This turns out to be quite a sharp pruning condition which we will call the {\em interlacing condition}. In particular if the interlacing condition of a graph $H$ is not satisfied, we shall say that $H$ does not interlace $G$.

Let us remark that this condition appears to be more efficient than the positive-definiteness criterion used in \cite{Degraer}. More precisely, we were able to find graphs such that the pruning condition used in \cite{Degraer} is satisfied but they do not interlace our graph. We were however not able to find graphs satisfying the converse situation. 
However, the drawback of this criterion is that the matrices in question are not symmetric and hence computing their eigenvalues is a much less efficient task. 

\subsection{Approach}\label{sec:appr}

The problem of determining whether a $(v,k,\lambda, \mu)$ SRG graph $G$ exists is thus reduced to the following. Pick an eigenvalue $r$ of $G$ with large multiplicity. Start with a large induced subgraph $H'$ that does not have $r$ as an eigenvalue and must appear as an induced subgraph of $G$. Extend $H'$ to a star-complement of $G$ and $r$ using the described pruning conditions to get rid of invalid graphs. Finally, for all potential star-complements $H$ compute the clique number of $\comp(H,r)$. In practice, $\comp(H,r)$ can be a very large and dense graph and we explain how to compute its clique number in Section \ref{Comp-Aspects}.

Now, let us  describe our approach for the classification of SRG with parameters $(75,32,10,16)$. For the eigenvalue we take $r = 2$ and look for a small list $\mathcal{L}$ with graphs of large order such that at least one member of $\mathcal{L}$ is an induced subgraph of a $(75,32,10,16)$ SRG $X$. 
When the list is obtained, we proceed to show that no graph in $\mathcal{L}$ is an induced subgraph of $X$ as follows. For $H \in \mathcal{L}$ let $\scc(H)$ be a largest induced subgraph of $H$ that does not have $2$ as an eigenvalue and has order at most $19$. Note that $\scc(H)$ may not be unique and in this case we can pick an arbitrary such subgraph. If $|V(\scc(H))| = 19$ then $\scc(H)$ is a star complement for $X$ and we use the theory described above to verify that $\omega( \comp(\scc(H),2)) < 56$, and hence that $H$ is not an induced subgraph of $X$. If $|V(\scc(H))| < 19$ then we extend $\scc(H)$ by adding $19-|V(\scc(H))|$ vertices in all possible ways to obtain (by Proposition \ref{Prop-SCextend}) a list of possible star complements for $X$. Again, we show that none of the obtained star complements has a compatibility graph with a large enough clique. The process of extending an induced subgraph $H$ to a graph of order $19$ is done by inductively introducing  new vertices in all possible ways, and in the end removing all candidates that have 2 as an eigenvalue or do not interlace.  In order to minimize the list of candidate graphs we also make use of the following observation. Suppose that there is a pair of vertices $u,v \in V(H)$ that does not yet have many common neighbors in the induced subgraph - that is $u \sim v$ and $|N(u) \cap N(v)| < \lambda=10$, or $u \not \sim v$ and $|N(u) \cap N(v)| < \mu=16$. Suppose further that for every $S \subset V(H) \setminus \{u,v\}$ all the graphs obtained by adding a new vertex  adjacent to $S \cup \{u,v\}$ that interlace $X$ also do not contain $2$ as an eigenvalue. Let us say that such a pair of vertices is {\em graceful}.

In virtue of Proposition \ref{Prop-SCextend} we can simply use these graphs when building a complete list of star complements of $X$ having $H$ as subgraph. Stating it as a proposition

\begin{prop}
If $u,v$ is a graceful pair for $H$ and $\mathcal{L}$ a list of all graphs obtained by adding a new vertex $x$ to $H$ that is joined to $u,v$ and a subset of $V(H) \setminus \{u,v\}$. Then there exist a star complement $G$ for $X$ such that at least one of the members of $\mathcal{L}$ is an induced subgraph of $G$.
\end{prop}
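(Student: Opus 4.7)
The statement is a small but useful reduction: the graceful hypothesis will let us add the next vertex to $H$ only among those extensions in which $x$ is joined to both $u$ and $v$, rather than exploring all $2^{|V(H)|}$ possible neighborhoods. The plan is to produce a concrete vertex $x\in V(X)\setminus V(H)$ adjacent to both $u$ and $v$ in $X$, and then feed the resulting induced subgraph $H\cup\{x\}$ into Proposition \ref{Prop-SCextend}.

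First, I would use the $(75,32,10,16)$ SRG parameters. Since $u,v$ is a graceful pair of $H$, the number of common neighbors of $u$ and $v$ inside $H$ is strictly less than $\lambda=10$ (when $u\sim v$) or strictly less than $\mu=16$ (when $u\not\sim v$). Because $X$ is strongly regular and $H$ is an induced subgraph of $X$, the total number of common neighbors of $u,v$ in $X$ is exactly $\lambda$ or $\mu$; hence at least one common neighbor $x$ must lie outside $V(H)$. Let $S=N_X(x)\cap(V(H)\setminus\{u,v\})$ and form the induced subgraph $H^\star$ of $X$ on $V(H)\cup\{x\}$; by construction $x$ is joined exactly to $\{u,v\}\cup S$, so $H^\star\in\mathcal{L}$.

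Second, since $H^\star$ is an induced subgraph of $X$, Proposition \ref{Prop-PartitionedAM} (applied to the singleton partition described in Section \ref{sec:appr}) tells us that $H^\star$ interlaces $X$. The graceful hypothesis then forces $2$ not to be an eigenvalue of $H^\star$. Proposition \ref{Prop-SCextend} now applies with $H':=H^\star$ and yields a star complement $G$ for $X$ and eigenvalue $2$ containing $H^\star$ as an induced subgraph. Since $H^\star\in\mathcal{L}$, this is the required member of $\mathcal{L}$.

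The only real step to think about is the first one, where we must be sure the SRG parameters give us a common neighbor outside $H$; this is immediate from the arithmetic definition of graceful. Everything else is bookkeeping: the interlacing conclusion is free because $H^\star$ actually sits inside $X$, and the extension to a star complement is handed to us verbatim by Proposition \ref{Prop-SCextend}. There is no serious obstacle; the content of the proposition is that the graceful hypothesis has been precisely tailored to the hypothesis of Proposition \ref{Prop-SCextend}.
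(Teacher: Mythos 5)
Your proof is correct and follows the same route the paper intends: the paper justifies this proposition with a one-line appeal to Proposition \ref{Prop-SCextend}, and your argument simply spells out the two steps that appeal relies on (the strong-regularity count guaranteeing a common neighbor $x$ of $u,v$ outside $V(H)$, so that $X[V(H)\cup\{x\}]\in\mathcal{L}$, and the graceful hypothesis guaranteeing that this genuinely induced, hence interlacing, extension has no eigenvalue $2$). No gaps; this is exactly the intended argument, made explicit.
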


The described approach is performed by the program {\em extend.c} that we describe later. In particular, it turns out that the above procedure is computationally feasible if the list $\mathcal{L}$ of induced subgraphs does not include graphs that are, when reduced to a subgraph without eigenvalue 2, of order less than 17. In practice, this is almost the same as demanding that for each $G \in \mathcal{L}$ we have $n(G)-k_2(G)\geq 17$, where $n(G)$ is the order of $G$ and $k_2(G)$ is the multiplicity of eigenvalue $2$ in $G$.

\section{Cliques in a SRG with parameters $(75,32,10,16)$}

In what follows let $X$ denotes a possible strongly regular graph with parameters $(75,32,10,16)$. Our main goal is to prove that $X$ does not exits. In order to do so we first establish a structural claim related to its cliques. Notice that the Hoffman bound \cite[pp. 204]{Godsil} implies that $\overline{X}$ has independence number at most $5$ and hence that $X$ has clique number at most $5$. On the other hand, Bondarenko, Prymak, and Radchenko developed a general tool for bounding the number of 4-cliques in a strongly regular graph \cite{bono}. In particular, they have established that a SRG with parameters $(75,32,10,16)$ has at least $783$ 4-cliques. 

In this section we show that in fact $X$ has clique number $5$, more precisely, we show the following result.

\begin{prop} \label{prop:clnum}
If $X$ exists, its clique number is $5$. Moreover, every $4$-clique of $X$ is contained in a $5$-clique.
\end{prop}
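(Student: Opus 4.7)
The plan is to combine the Hoffman bound with a star-complement argument. The Hoffman bound already gives $\omega(X) \le 5$, and by the Bondarenko--Prymak--Radchenko result $X$ contains at least $783$ 4-cliques. Therefore it suffices to prove the sharper statement: every $4$-clique of $X$ is contained in a $5$-clique. Given this, since a $4$-clique exists, some $5$-clique exists, which together with $\omega(X) \le 5$ yields $\omega(X) = 5$.

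For the sharper statement, I would argue by contradiction. Suppose $K = \{v_1, v_2, v_3, v_4\}$ is a $4$-clique of $X$ that is not contained in any $5$-clique; equivalently, $K$ has no common neighbor in $X$. Solving the SRG quadratic $x^2 + 6x - 16 = 0$ gives the restricted eigenvalues $r = 2$ and $s = -8$, with multiplicities $f = 56$ and $g = 18$. Take the eigenvalue $r = 2$, for which a star-complement of $X$ has order $75 - 56 = 19$. Since $K_4$ has spectrum $\{3,-1,-1,-1\}$, the subgraph $H' = K$ does not have $2$ as an eigenvalue, and by Proposition \ref{Prop-SCextend} it extends to some star-complement $H$ of $X$ for $r = 2$ of order $19$.

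To turn this into a finite computation, use the approach of Section \ref{sec:appr}. First enlarge the seed $H'$ by adding the structural data that must be present around a maximal $4$-clique: each pair $\{v_i, v_j\}$ has exactly $\lambda - 2 = 8$ common neighbors outside $K$, each $v_i$ has $k - 3 = 29$ neighbors outside $K$, and, crucially, no outside vertex is adjacent to all four $v_i$. Choose a larger starting graph $H'$ built from $K$ together with some of these forced neighbors and their mutual adjacencies, so that the resulting list of seeds is small. Then iteratively add vertices, keeping only subgraphs that interlace $X$ in the sense of Proposition \ref{Prop-PartitionedAM}, that are compatible with the $(75,32,10,16)$ common-neighbor counts, and that respect the constraint $K$ is maximal. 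Use the graceful-pair reduction to prune the enumeration. This yields a finite list $\mathcal{L}$ of candidate star-complements $H$ of order $19$ with $2 \notin \mathrm{spec}(H)$. For each $H \in \mathcal{L}$ compute $\comp(H, 2)$ and verify $\omega(\comp(H, 2)) < 56$ with the clique-number algorithm developed later in the paper. This contradicts the $f$-clique guaranteed by the proposition on page 4, so no maximal $4$-clique exists, completing the proof.

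The main obstacle is computational rather than conceptual. Since $K_4$ is a tiny seed, the raw space of order-$19$ extensions is astronomical; the argument will only go through if the initial subgraph is enlarged by local information forced around a maximal $4$-clique, the interlacing and graceful-pair prunings are applied aggressively, and the specialised clique-number algorithm is used on the resulting dense comparability graphs. Choosing a good seed and engineering the pruning so that $|\mathcal{L}|$ stays manageable is the delicate part; the rest is a verification.
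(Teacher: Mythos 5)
Your overall architecture matches the paper's: the Hoffman bound gives $\omega(X)\le 5$, the Bondarenko--Prymak--Radchenko count guarantees that a $4$-clique exists, and the real content is showing that a maximal $4$-clique leads to a contradiction via star complements for the eigenvalue $2$ (whose multiplicity you compute correctly as $56$, giving star complements of order $19$). The elementary local counts you list ($8$ common neighbours outside $K$ per edge of $K$, $29$ neighbours outside $K$ per vertex, and $b_4=0$) are also where the paper starts.

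The genuine gap is the step you defer to ``choose a larger starting graph $H'$ built from $K$ together with some of these forced neighbors \dots so that the resulting list of seeds is small.'' That is not a verification detail; it is the proof. The paper makes it precise by applying Lemma~\ref{Strucutral} to $H=K_4$ with $b_4=0$, which forces the distribution $(b_0,b_1,b_2,b_3)$ of outside vertices by their number of neighbours in $K_4$ to be one of exactly four vectors, $(3,20,48,0)$, $(0,29,39,3)$, $(1,26,42,2)$, $(2,23,45,1)$, and then runs a separate hand analysis for each. Two of the cases never reach the star-complement machinery at all: for $(3,20,48,0)$ the forced neighbourhood structure yields a $16$-regular subgraph, contradicting Makhnev's theorem, and for $(2,23,45,1)$ a forced configuration on about $24$ vertices already fails interlacing (Proposition~\ref{prop:case223451}). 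Only the remaining two cases are pushed, after roughly a dozen counting lemmas each, to explicit seed lists of $3597$ and $18089$ graphs of order around $20$--$23$ (Propositions~\ref{prop:case126422} and~\ref{prop:case029313}), which are then extended to star complements and checked. Without this case split and the accompanying structural lemmas, your enumeration starts from a seed of order roughly $4$--$10$, and the paper explicitly records (end of Section~\ref{sec:appr}) that the extension procedure is only feasible when the seed, reduced to a subgraph without eigenvalue $2$, already has order at least $17$. So as written your argument stalls exactly at the combinatorial explosion you yourself flag; the missing idea is the $(b_0,\ldots,b_4)$ case analysis via Lemma~\ref{Strucutral} and the per-case structure theorems that make the seeds large enough for the computation to terminate.
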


In order to prove the result we need to recall a very useful lemma whose proof the reader may find in \cite{bono}. Let $H$ be an induced subgraph of order $m$ of a $(v,k,\lambda, \mu)$ strongly regular graph $G$, and let $(d_0,d_1,\ldots,d_{m-1})$ be a vector such that $d_i$ denotes the number of vertices of $H$ having degree $i$ in $H$. Similarly let $(b_0,\ldots,b_m)$ be a vector where $b_i$ denotes the number of vertices of $G-H$ that have $i$ neighbors in $H$. The next lemma gives a relationship between these numbers.

\begin{lemma} \label{Structural}
With notation as above, the following three equations hold 
\begin{align}
\label{eqn:eqlabel}
\begin{split}
\sum_{i=0}^m b_i &= v-m\,,
\\
\sum_{i=0}^m i b_i &= mk - \sum_{i=0}^{m-1} jd_{j}\,,
\\
\sum_{i=0}^m {i \choose 2} b_i &= {m \choose 2} \mu - \sum_{i=0}^{m-1} {i \choose 2} d_i + \frac{1}{2}(\lambda - \mu)\sum_{i=0}^{m-1}id_i \,.
\end{split}
\end{align}

\end{lemma}

Suppose now that $X$ has a $4$-clique $K_4$ that is not contained in a $5$-clique. Letting $H = K_4$ and applying the above Lemma it can easily be verified that there are $4$ solutions $(b_0,b_1,b_2,b_3)$ (notice that by our assumption $b_4 = 0$) to the above system, namely 

\begin{equation}
(3, 20, 48, 0), (0,29,39,3),(1, 26, 42, 2) \mbox{ and } (2, 23, 45, 1).
\end{equation}
 
 In what follows we analyze all these possibilities, showing that none of these solutions occurs as a configuration in $X$. We split the proof into four sections each dealing with a different value of $(b_0,b_1,b_2,b_3)$. The general idea is to use the structure given by a specific configuration to find a small list of graphs that must be an induced subgraph of $X$. For each possible case we have written simple Sage \cite{sage} programs that build graphs with the established structure and prune them using the interlacing principle we described. Whenever we assert that some induced structure is not possible or say that there is a list of graphs satisfying it, there is a corresponding Sage program that computed this part of the claim. Each such program (with the respective output) is recorded in Table \ref{table:sageprog} and is available online, see \cite{GitHub}.  Throughout the rest of the paper $K_4$ will denote a $4$-clique of $X$. 

In the next section we shall analyze the case when $X$ has clique number 5. Since we will also need a glimpse into this case already in this section, let us remark at this point that if $K_5$ is a $5$-clique of $X$ then every vertex in $V(X) - V(K_5)$ has precisely two neighbors in $K_5$. One way to see this claim is to use the above system of equalities which only gives $(0,0,70,0,0,0)$ as a solution vector. Finally, let us remark that throughout the paper we will use the notation $X[S]$ to denote the subgraph of $X$ induced by the set of vertices $S \subseteq V(X)$.

\subsection{Case $(3, 20, 48, 0)$}

Let us denote with $X_0, X_1, X_2$ the subsets of vertices in $V(X)\setminus V(K_4) $ that have, respectively, 0,1, and 2 neighbors in $K_4$. Moreover, denote the vertices in  $X_0$ by $x_1,x_2,x_3$.

\begin{lemma} Every vertex in $X_2$ has precisely two neighbors in $X_0$ while each two vertices in $X_0$ are not adjacent and have all 16 common neighbors in $X_2$.
\end{lemma}

\begin{proof}\label{lem:first}
Let $x_i\in X_0$. We use an argument that will be repeatedly used in this paper. Since $x_i$ is not adjacent to any of the vertices in $K_4$ it has to have 16 common neighbors (since $X$ is strongly regular with $\mu=16$) with each of its vertices. Thus there are $4\cdot 16$ paths of length 2 from $x_i$ to $K_4$. On the other hand, $x_i$ has 32 neighbors ($X$ is 32-regular) in $X_0 \cup X_1 \cup X_2$. All the neighbors are in fact in $X_2$, for otherwise they could not form $64$ $2$-paths to $K_4$. In particular this implies that the vertices of $X_0$ are not adjacent, proving the last statement of the lemma.

Since for $1\leq i < j \leq 3$ it holds $|N(x_i)\cap N(x_j)|=16$, $|N(x_i)|=32$, and $|X_2| = 48$,  we have
$$48\geq |N(x_1) \cup N(x_2) \cup N(x_3)| = 3\cdot 32- 3\cdot 16 + |N(x_1) \cap N(x_2) \cap N(x_3)|,$$

\noindent by the inclusion-exclusion principle. Thus $N(x_1) \cap N(x_2) \cap N(x_3) = \emptyset$. Therefore every vertex in $X_2$ is adjacent to precisely two vertices in $X_0$.
\end{proof}

For $i \in \{1,2,3\}$ let $X_2^{i,j} \subseteq X_2$ be the graphs induced by $N(x_i) \cap N(x_j)$. By the above lemma, the are of order 16.

\begin{lemma}\label{lem:second}
Each of the graphs $X_{1,2},X_{1,3},X_{2,3}$ is isomorphic to the disjoint union of cycles.
\end{lemma}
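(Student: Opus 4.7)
The plan is to prove the lemma in two steps: obtain the order $16$ directly from $\mu$, and then establish $2$-regularity of each $X_2^{i,j}$ (which is equivalent to being a disjoint union of cycles) from a small linear system.

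For the order, I invoke the previous lemma, which places all neighbors of every $x_\ell$ inside $X_2$. In particular $x_1, x_2, x_3$ are pairwise non-adjacent (they lie in $X_0$, disjoint from $X_2$). Strong regularity then gives $|X_2^{i,j}| = |N(x_i) \cap N(x_j)| = \mu = 16$. Moreover, since every vertex of $X_2$ has exactly two neighbors in $X_0$, the three sets $X_2^{1,2}, X_2^{1,3}, X_2^{2,3}$ partition $X_2$, and for each $\ell$ one has the clean decomposition $N(x_\ell) = X_2^{\ell,j} \cup X_2^{\ell,k}$, where $\{j,k\} = \{1,2,3\}\setminus\{\ell\}$.

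For the $2$-regularity, I fix an arbitrary $v \in X_2^{i,j}$ and let $k$ be the remaining index. Set $a = |N(v) \cap X_2^{i,j}|$, $b = |N(v) \cap X_2^{i,k}|$, $c = |N(v) \cap X_2^{j,k}|$. Combining the adjacencies $v \sim x_i$, $v \sim x_j$, $v \not\sim x_k$ with the decompositions above yields the system $a + b = \lambda = 10$, $a + c = \lambda = 10$, and $b + c = \mu = 16$, whose unique solution is $a = 2$, $b = c = 8$. Thus every vertex of $X_2^{i,j}$ has exactly two neighbors inside $X_2^{i,j}$, which makes $X_2^{i,j}$ a $2$-regular graph on $16$ vertices, hence a disjoint union of cycles.

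I do not anticipate a real obstacle; the argument is essentially the same counting idea as in the previous lemma, powered by the fact that the $X_2^{i,j}$'s partition $X_2$. The only points that need a moment of care are (i) confirming $v \notin N(v)$ so that $v$ itself is not counted in its own neighborhood tallies, and (ii) correctly recognizing that $N(x_\ell) \subseteq X_2$ forces the common-neighborhood counts with each $x_\ell$ to split cleanly into contributions from only two of the three sets $X_2^{i,j}$. Both are routine verifications.
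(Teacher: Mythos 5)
Your proof is correct. The first step (order $16$ from $x_i \not\sim x_j$ and $\mu = 16$, using that $N(x_i) \subseteq X_2$) is the same as the paper's. In the second step you take a genuinely different, and arguably cleaner, route: the paper counts $2$-paths from $v \in X_{1,2}$ to $K_4$ (obtaining $2\cdot 10 + 2\cdot 16 = 2k + (28-k) + 2\cdot 3$, hence $k = 18$ neighbors of $v$ in $X_2$) and then subtracts the $\mu = 16$ common neighbors that $v$ must share with the non-neighbor $x_3$, all of which lie in $X_2 \setminus X_{1,2} = N(x_3)$, leaving exactly $2$ neighbors in $X_{1,2}$. You instead never touch $K_4$ in this step: you exploit the fact that $X_2^{1,2}, X_2^{1,3}, X_2^{2,3}$ partition $X_2$ and that $N(x_\ell)$ is exactly the union of the two parts indexed by $\ell$, and then read off the symmetric linear system $a+b = a+c = \lambda = 10$, $b+c = \mu = 16$ from the two $\lambda$-conditions and one $\mu$-condition. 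This buys you a more symmetric argument that also pins down the cross-degrees $b = c = 8$ explicitly (the paper only gets the aggregate $16$ neighbors in $N(x_3)$), at the cost of having to justify carefully that the common-neighbor counts split cleanly over the partition --- which you do, via $N(x_\ell) \subseteq X_2$ and the emptiness of the triple intersection from the previous lemma. Both arguments are sound and of comparable length.
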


\begin{proof}
 Let $v \in X_{1,2}$. We count the number of $2$-paths from $v$ to $K_4$. Since $v$ is adjacent to $2$ vertices of $K_4$, there must be $2\cdot 10 + 2\cdot 16$ such paths, counted as in the previous lemma. Denote with $k$ the number of neighbors of $v$ in $X_2$. By the previous lemma, $v$ is adjacent to $2$ vertices in $X_0$, thus it is adjacent to $32-2-2-k$ vertices in $X_1$. By counting $2$-paths we thus have:
$$2\cdot 10 + 2\cdot 16=2k+1(28-k)+0\cdot 2+2\cdot 3\,.$$
 
Therefore, $v$ has $k=18$ neighbors in $X_2$,  and since it is not adjacent to $x_3$ it must have 16 neighbors in $X_2 - X^{1,2} = N(x_3)$. This implies that $v$ has precisely 2 neighbors in $X_{1,2}$.
\end{proof}

In \cite{Makhnev} an analysis of of subgraphs in $X$ (if existent) was made. For two non-adjacent vertices $x,y$ in $X$ call the subgraph of $X$ induced on the common neighbors of $x$ and $y$ a $\mu$-subgraph. It was shown that if $X$ exists, then it does not have a regular $\mu$-subgraph. Lemma \ref{lem:first} states that each of $X_{1,2},X_{1,3},X_{2,3}$ is a $\mu$ subgraph, while Lemma \ref{lem:second} states that it is regular. This cannot be, thus the case $(3, 20, 48, 0)$ is impossible.

\subsection{Case $(1,26,42,2)$}

Let $X_0,X_1, X_2, X_3$ be the sets of vertices having $0,1,2$, and $3$ neighbors in $K_4$, respectively. In particular, let $x_0 \in X_0$ and $x_1\ne x_2 \in X_3$. In what follows we prove a series of claim describing the structure of a graph with this configuration.

\begin{lemma} \label{lem:tab1}
Vertices $x_1$ and $x_2$ are not adjacent.
\end{lemma}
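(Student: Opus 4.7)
The plan is to argue by contradiction: suppose $x_1 \sim x_2$. Since each of $x_1, x_2$ has exactly $3$ neighbors in $V(K_4)$, each misses a unique vertex of $K_4$, and I split into cases based on whether they miss the same vertex of $K_4$ or distinct vertices.

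If they miss the same vertex, say $v_4$, then $C = \{v_1, v_2, v_3, x_1, x_2\}$ is a $5$-clique. By the fact recalled just before this subsection (every vertex outside a $5$-clique of $X$ has exactly two neighbors in it), the vertex $v_4$---adjacent to $v_1, v_2, v_3$ but non-adjacent to $x_1, x_2$---would have $3$ neighbors in $C$, a contradiction.

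If they miss distinct vertices, say $x_1$ misses $v_4$ and $x_2$ misses $v_3$, I apply Lemma \ref{Strucutral} to the $6$-vertex induced subgraph $H = X[V(K_4) \cup \{x_1, x_2\}]$, whose degree sequence is $(5,5,4,4,4,4)$. To sharpen the extracted constraints I refine the partition of $V(X) \setminus V(H)$ by pairs $(i,j)$, where $i = |N(w) \cap V(K_4)|$ and $j = |N(w) \cap \{x_1, x_2\}|$, and observe that $i \le 2$ since $X_3 = \{x_1, x_2\} \subseteq V(H)$. Combining the configuration sizes $|X_0|=1$, $|X_1|=26$, $|X_2|=42$ with the edge-count from $\{x_1, x_2\}$ to $V(X) \setminus V(H)$, the identity $|N(x_1) \cap N(x_2)| = 10$ (with $v_1, v_2$ the two common neighbors inside $H$), and the third equation of Lemma \ref{Strucutral} produces a small linear system in the nine non-negative integer unknowns $n_{ij}$. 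Splitting on which of $n_{00}, n_{01}, n_{02}$ contains the unique vertex of $X_0$ yields three sub-cases; in each, elimination forces $n_{10}$ to be negative, the required contradiction. The main obstacle I foresee is the bookkeeping in this second case---assembling the refined partition together with its full set of linear constraints correctly---after which each sub-case is settled by a short arithmetic check.
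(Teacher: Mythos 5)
Your proof is correct, but it takes a genuinely different route from the paper. The paper's proof is computational: it enumerates the two non-isomorphic graphs on $V(K_4)\cup\{x_1,x_2\}$ with $x_1\sim x_2$, adjoins the vertex $x_0$ in all ways to get six candidates, and verifies by computer that none satisfies the interlacing condition. You avoid the computer entirely. Your first case (both of $x_1,x_2$ non-adjacent to the same vertex $v_4$ of $K_4$) is a clean two-line contradiction via the fact, recorded just before the case analysis, that every vertex off a $5$-clique has exactly two neighbors on it. Your second case applies Lemma \ref{Strucutral} to the $6$-vertex graph $H$ with degree sequence $(5,5,4,4,4,4)$ and refines the outer counts into the nine quantities $n_{ij}$; I checked the resulting system --- the constraints $n_{0\ast}$ summing to $1$, $n_{1\ast}$ to $26$, $n_{2\ast}$ to $42$, the edge count $\sum j\,n_{ij}=56$, the common-neighbor count $\sum_i n_{i2}=8$, and the third equation of Lemma \ref{Strucutral} in the form $n_{11}+2n_{12}+2n_{21}+4n_{22}=68$ --- and in each of the three sub-cases one indeed gets $n_{10}=n_{12}-18$, $n_{12}-16$, or $n_{12}-14$ with $n_{12}\le 8$, hence $n_{10}<0$ as you claim. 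What your approach buys is a fully human-verifiable proof of this lemma and independence from the vertex $x_0$ and from the interlacing machinery; what the paper's approach buys is brevity and uniformity, since the same interlacing check is the workhorse for many other steps (e.g.\ Lemma \ref{lem:tab9} and Propositions \ref{prop:case126422}--\ref{prop:case029313}), so no separate counting argument needs to be set up here.
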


\begin{proof}
Suppose $x_1 \sim x_2$. There are up to isomorphism only two possible induced graphs on $K_4 \cup \{x_1,x_2\}$. Moreover, if we add the vertex $x_0$ we obtain $6$ candidate graphs for an induced subgraph of $X$. None of them interlaces $X$, which was an easy task to check by computer.
\end{proof}

\begin{lemma}\label{lem:b}
Vertex $x_0$ is adjacent to both vertices in $X_3$. Moreover, it has $2$ neighbors in $X_1$ and $28$ neighbors in $X_2$.
\end{lemma}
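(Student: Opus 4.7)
The plan is to reduce the statement to an interlacing check on a small induced subgraph, in the same spirit as Lemma~\ref{lem:tab1}.

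Let $a_i$ denote the number of neighbors of $x_0$ in $X_i$. Since $x_0$ is not adjacent to itself and $X_0=\{x_0\}$, we have $a_0=0$, and since $x_0$ has degree $32$ with no neighbor in $K_4$ we get $a_1+a_2+a_3=32$. Counting $2$-paths from $x_0$ to $K_4$ in two ways --- once via $\mu=16$ common neighbors for each of the four $K_4$-vertices, once via the fact that a neighbor of $x_0$ in $X_i$ contributes exactly $i$ such $2$-paths --- yields $a_1+2a_2+3a_3=64$. Subtracting and substituting gives $a_2=32-2a_3$ and $a_1=a_3$. Since $|X_3|=2$ we have $a_3\in\{0,1,2\}$, and the assertions ``$2$ neighbors in $X_1$'' and ``$28$ neighbors in $X_2$'' become immediate consequences of $a_3=2$.

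I next consider the induced subgraph $H=X[K_4\cup\{x_0,x_1,x_2\}]$ of order seven. Its isomorphism type is specified by (i) whether the unique vertex of $K_4$ not adjacent to $x_1$ and the one not adjacent to $x_2$ coincide or differ (up to the symmetry of $K_4$, exactly two cases), and (ii) the edges between $x_0$ and $\{x_1,x_2\}$, of which there are $a_3\in\{0,1,2\}$. Lemma~\ref{lem:tab1} already tells us $x_1\not\sim x_2$. Taken together this leaves a very short list of candidate seven-vertex graphs (at most eight, even fewer modulo isomorphism).

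For each candidate $H$ I form the partitioned adjacency matrix using the partition $(\{v_1\},\ldots,\{v_7\},V(X)\setminus V(H))$ from Section~\ref{sec:prelim}. Regularity of $X$ pins down the row/column sums into the last cell, so the matrix is fully determined by $H$, and Proposition~\ref{Prop-PartitionedAM} requires its eigenvalues to interlace those of $X$. As in Lemma~\ref{lem:tab1}, a direct computer check should then eliminate every candidate in which $x_0$ is not adjacent to both $x_1$ and $x_2$; this is precisely the statement $a_3=2$, and combined with the counting above it finishes the proof.

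The main obstacle is not structural but computational: one needs to correctly enumerate the candidate graphs up to isomorphism and verify the interlacing condition for each, analogous to the Sage check referenced in the proof of Lemma~\ref{lem:tab1}. The risk is that interlacing alone might fail to rule out one of the two configurations with $a_3\in\{0,1\}$, in which case one would have to extend $H$ by a further vertex (in $X_1$ or $X_2$) and repeat the pruning on the slightly enlarged candidates, still entirely within the paradigm laid out in Section~\ref{sec:appr}.
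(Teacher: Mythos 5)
Your counting is correct and matches the first half of the paper's argument: writing $a_i$ for the number of neighbors of $x_0$ in $X_i$, the two equations $a_1+a_2+a_3=32$ and $a_1+2a_2+3a_3=4\mu=64$ do give $a_1=a_3$ and $a_2=32-2a_3$. But the entire content of the lemma is the claim $a_3=2$, and your proposal does not actually establish it. You defer it to an interlacing check on the six or so seven-vertex graphs $X[K_4\cup\{x_0,x_1,x_2\}]$, and you yourself concede that this check may fail to eliminate the configurations with $a_3\in\{0,1\}$, offering only a vague fallback (``extend by a further vertex and repeat''). A proof whose decisive step is a computation of unknown outcome, with an unspecified contingency plan if it fails, has a genuine gap: nothing in your argument guarantees that interlacing on so few vertices is strong enough here, and the paper notably does \emph{not} use interlacing for this lemma.

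The missing idea is to exploit strong regularity ($\mu=16$) on the putative non-adjacent pair $x_0,x_1$ directly, rather than spectrally. Suppose $a_3=k\le 1$, and without loss of generality $x_0\not\sim x_1$. Then $x_0$ and $x_1$ must have $16$ common neighbors, and since $x_0$ has no neighbor on $K_4$, these all lie in $X_1\cup X_2\cup\{x_2\}$. Counting $2$-paths from $x_1$ to $K_4$ (using $x_1\not\sim x_2$ from Lemma~\ref{lem:tab1} and $x_1\not\sim x_0$) shows that $x_1$ has only $8$ neighbors in $X_2$; and by your own computation $x_0$ has only $a_1=k\le 1$ neighbors in $X_1$. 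Hence $x_0$ and $x_1$ share at most $8+1+1=10<16$ common neighbors, a contradiction. Therefore $x_0$ is adjacent to both vertices of $X_3$, i.e.\ $a_3=2$, and your counting then yields $a_1=2$ and $a_2=28$. This closes the gap with a short pencil-and-paper argument and no computer verification at all.
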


\begin{proof}
For the sake of contradiction, suppose $x_0$ is adjacent to $k \in \{0,1\}$ vertices of $X_3$. Let $t$ be the number of neighbors of $x_0$ in $X_1$. By double counting  $2$-paths from $x_0$ to $K_4$ we obtain: $$4\cdot 16 = 3k + t + 2(32-k-t)\,,$$ which gives that $k = t$. Without loss of generality suppose that $x_1$ is not adjacent to $x_0$. By counting the number of $2$-paths in a similar way we obtain that $x_1$ has $8$ neighbors in $X_2$. But by strong regularity, $x_0$ and $x_1$ must have $16$ common neighbors which is not possible since $x_0,x_1$ can share at most $k \leq 1$ common neighbors in $X_1$ and $8$ common neighbors in $X_2$. Hence $x_0$ is adjacent to both $x_1$ and $x_2$ and so $k = t = 2$ and the claim follows.
\end{proof}

In virtue of Lemma \ref{lem:b}, let $x_0',x_0''$ be the vertices in $X_1$ that are adjacent to $x_0$.

\begin{lemma}\label{lem:a}
Vertices $x_1,x_2$ each have $19$ neighbors in $X_1$ and $9$ neighbors in $X_2$. In particular, $12$ or $13$ vertices of $X_1$ are adjacent to both $x_1$ and $x_2$, $6$ or $7$ vertices only to $x_1$, and $6$ or $7$ only to $x_2$.
\end{lemma}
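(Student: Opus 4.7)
The plan is to repeat the double-counting of $2$-paths from $x_1$ (and $x_2$) to $K_4$ that has been used throughout this section, and then to exploit the constraint $|N(x_1)\cap N(x_2)|=\mu=16$ combined with the size of $X_1$.

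First I would pin down the counts in $X_1$ and $X_2$ for $x_1$ (the argument for $x_2$ is identical). Since $x_1\in X_3$ it has $3$ neighbors in $K_4$ and a unique non-neighbor $v^\ast\in K_4$; so $x_1$ contributes $3\cdot\lambda+1\cdot\mu=3\cdot 10+16=46$ to the count of $2$-paths from $x_1$ to $K_4$. On the other hand $x_1$ has $32-3=29$ neighbors outside $K_4$: by Lemma~\ref{lem:b} exactly one of them is $x_0\in X_0$, by Lemma~\ref{lem:tab1} none is $x_2$, and $X_3=\{x_1,x_2\}$, so the remaining $28$ neighbors split as $a$ in $X_1$ and $b$ in $X_2$ with $a+b=28$. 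Counting the same $2$-paths by middle vertex gives
$$3\cdot 3+0+a\cdot 1+b\cdot 2 = 46,$$
so $a+2b=37$ and hence $a=19$, $b=9$, as claimed.

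For the second assertion I would decompose $N(x_1)\cap N(x_2)$ according to the part of $V(X)$ in which the common neighbors live. Since $x_1\not\sim x_2$ (Lemma~\ref{lem:tab1}), strong regularity gives $|N(x_1)\cap N(x_2)|=\mu=16$. Each of $x_1,x_2$ misses exactly one vertex of $K_4$, so they share either $3$ common neighbors in $K_4$ (if they miss the same vertex) or $2$ (otherwise). The vertex $x_0$ is adjacent to both by Lemma~\ref{lem:b}, contributing one more common neighbor, and there are no common neighbors in $X_3\setminus\{x_1,x_2\}=\emptyset$. Writing $c=|N(x_1)\cap N(x_2)\cap X_1|$ and $d=|N(x_1)\cap N(x_2)\cap X_2|$ we therefore have
$$c+d \in \{12,13\}.$$

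To finish, I would produce the matching lower bound on $c$ from inclusion-exclusion in $X_1$: since $|N(x_i)\cap X_1|=19$ for $i=1,2$ and $|X_1|=26$,
$$c \;=\; |N(x_1)\cap X_1|+|N(x_2)\cap X_1|-|(N(x_1)\cup N(x_2))\cap X_1|\;\geq\; 19+19-26 \;=\; 12.$$
Combined with $c+d\leq 13$ and $d\geq 0$ this forces $c\in\{12,13\}$. The split of $X_1$ then follows immediately: $|N(x_1)\setminus N(x_2)\cap X_1|=19-c\in\{6,7\}$, symmetrically for $x_2$.

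The steps are all elementary double counting; the only small subtlety is keeping straight that $x_1,x_2\in X_3$ each have three (not four) neighbors in $K_4$, which is what makes the $\lambda$ vs.\ $\mu$ split in the $2$-path count work out to give $b=9$ rather than the $b=1$ one would get by assuming four neighbors in $K_4$. I do not anticipate a significant obstacle here — the lemma is a routine but careful bookkeeping exercise that sets up the configuration for subsequent structural claims.
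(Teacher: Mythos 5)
Your proof is correct and follows essentially the same route as the paper: the first assertion by the standard double count of $2$-paths from $x_i$ to $K_4$, and the second by combining inclusion--exclusion in $X_1$ (giving the lower bound $12$) with the count of the $16$ common neighbors of $x_1,x_2$ distributed over $K_4$, $x_0$, $X_1$, and $X_2$ (giving the upper bound $13$). You merely spell out the details that the paper leaves as ``an easy application of the already used double counting argument.''
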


\begin{proof}
The first part of the claim is an easy application of the already used double counting argument. Let now $N_{x_1}$ and $N_{x_2}$ be the neighbors of $x_1$ and $x_2$ in $X_1$, respectively. Since $|N_{x_1} \cup N_{x_2} |\leq 26$ (the size of $X_1$), we must have $|N_{x_1} \cap N_{x_2}| \geq 12$ by the inclusion-exclusion principle. On the other hand, $x_1$ and $x_2$ must have 16 common neighbors. Since $x_0$ is a common neighbor and they have 2 or 3 common neighbors on $K_4$ it follows that $|N_{x_1} \cap N_{x_2}| \leq 13$. The other assertions follow easily.
\end{proof}

\begin{lemma}\label{lem:d}
For $i=1,2$, the vertex $x_i$ is adjacent to at least one of the vertices in $\{x_0',x_0''\}$.
\end{lemma}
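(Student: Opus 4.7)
The plan is to argue by contradiction: suppose $x_i$ is adjacent to neither $x_0'$ nor $x_0''$, and derive that $x_0$ and $x_i$ cannot have enough common neighbors.

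First, I would record that $x_0 \sim x_i$ by Lemma \ref{lem:b}, so by strong regularity with $\lambda = 10$ they must share exactly $10$ common neighbors. I would then locate where these common neighbors can live by partitioning $V(X) \setminus \{x_0, x_i\}$ into $K_4,\, X_0,\, X_1,\, X_2,\, X_3 \setminus \{x_i\}$ and ruling out pieces one by one. The vertex $x_0$ has no neighbor in $K_4$ (since $x_0 \in X_0$), so $K_4$ contributes nothing. The set $X_0$ contributes nothing since $X_0 = \{x_0\}$. The set $X_3 \setminus \{x_i\} = \{x_j\}$ also contributes nothing: it is a neighbor of $x_0$ by Lemma \ref{lem:b}, but by Lemma \ref{lem:tab1} the two vertices in $X_3$ are non-adjacent, so $x_j \not\sim x_i$.

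Hence the $10$ common neighbors of $x_0$ and $x_i$ must all lie in $X_1 \cup X_2$. The neighbors of $x_0$ in $X_1$ are, by definition, exactly $\{x_0', x_0''\}$, so $|N(x_0) \cap N(x_i) \cap X_1| \leq 2$, with equality only if $x_i$ is adjacent to both of them. Meanwhile, by Lemma \ref{lem:a}, $x_i$ has only $9$ neighbors in $X_2$, so $|N(x_0) \cap N(x_i) \cap X_2| \leq 9$. If $x_i$ were adjacent to neither $x_0'$ nor $x_0''$, the first bound would drop to $0$, forcing at least $10$ common neighbors in $X_2$, which contradicts the bound $9$. Therefore $x_i$ must be adjacent to at least one of $x_0', x_0''$, as claimed.

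There is no real obstacle here — the whole argument is a one-step pigeonhole combining $\lambda = 10$, the description of $N(x_0)$ from Lemma \ref{lem:b}, and the count of $x_i$'s neighbors in $X_2$ from Lemma \ref{lem:a}. The only thing to be careful about is making sure all ambient sets are accounted for when tallying common neighbors, in particular remembering to exclude the other vertex of $X_3$ via Lemma \ref{lem:tab1}.
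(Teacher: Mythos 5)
Your proof is correct and follows essentially the same route as the paper: the paper's (terser) argument likewise combines $\lambda=10$ with the fact from Lemma~\ref{lem:a} that $x_i$ has only $9$ neighbors in $X_2$ to force a common neighbor in $\{x_0',x_0''\}$. Your version merely makes explicit the bookkeeping of where the common neighbors can lie, which the paper leaves implicit.
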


\begin{proof}
For $i =1,2$, the vertex $x_i$ has $10$ common neighbors with $x_0$. By Lemma \ref{lem:a}, $x_i$ only has $9$ neighbors in $X_2$, thus it must be adjacent to at least one of $x_0', x_0''$.
\end{proof}

Let $X_2^{-0}$ be the set of vertices in $X_2$ that are not adjacent to $x_0$. By Lemma \ref{lem:b}, $|X_2^{-0}|=14$.

\begin{lemma}
At most one vertex from $X_2^{-0}$ is adjacent to $x_1$, and at most one is adjacent to $x_2$.
\end{lemma}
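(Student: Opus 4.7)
The strategy is to apply the strong-regularity condition ($\lambda = 10$) to the adjacent pair $\{x_0, x_1\}$ and exploit the detailed neighbourhood data already collected in Lemmas~\ref{lem:tab1}--\ref{lem:d}. Because $x_0 \sim x_1$, we have $|N(x_0)\cap N(x_1)| = 10$, and I would first localise these ten common neighbours. None of them can sit in $V(K_4)$, since $x_0 \in X_0$ has no neighbour there; and $x_2$ is ruled out as a common neighbour by Lemma~\ref{lem:tab1}. Hence every common neighbour of $x_0$ and $x_1$ lies in $X_1 \cup X_2$.

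Inside $X_1$, the only possibilities are $x_0'$ and $x_0''$ (the two vertices of $X_1$ adjacent to $x_0$, as supplied by Lemma~\ref{lem:b}), and by Lemma~\ref{lem:d} the quantity $s_1 := |N(x_1) \cap \{x_0', x_0''\}|$ lies in $\{1,2\}$. Consequently $x_0$ and $x_1$ share exactly $10 - s_1 \in \{8,9\}$ neighbours inside $X_2$, all of which necessarily belong to $X_2 \setminus X_2^{-0}$ (by definition of $X_2^{-0}$). Now I would compare this against the total count of $x_1$'s neighbours in $X_2$: Lemma~\ref{lem:a} says that $x_1$ has only $9$ neighbours in $X_2$ altogether, so the number of them lying in $X_2^{-0}$ equals $9 - (10 - s_1) = s_1 - 1 \in \{0,1\}$, which is precisely the bound claimed for $x_1$. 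The corresponding bound for $x_2$ then follows by applying the identical argument to the adjacent pair $\{x_0, x_2\}$, whose data is symmetric in every respect used above.

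I do not anticipate any real obstacle: the argument is pure bookkeeping, made tight by the fact that Lemma~\ref{lem:a} barely leaves room for $x_1$ to have any neighbour in $X_2$ that is not already forced to be a common neighbour with $x_0$. The one point that demands care is verifying that the partition $V(X) \setminus (V(K_4) \cup \{x_0,x_1\}) = \{x_2\} \cup X_1 \cup X_2$ is exhausted when localising the common neighbours, so that no hidden common neighbour inflates the count and invalidates the subtraction $9 - (10 - s_1)$.
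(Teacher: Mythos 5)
Your argument is correct and is essentially the paper's own proof: both localise the ten common neighbours of the adjacent pair $\{x_0,x_i\}$, observe that at most two of them (namely $x_0',x_0''$) can lie in $X_1$ and none in $V(K_4)\cup\{x_{3-i}\}$, and conclude that at least $8$ of $x_i$'s $9$ neighbours in $X_2$ must be adjacent to $x_0$. Your version is marginally sharper in that it pins the count in $X_2^{-0}$ down to exactly $s_1-1$, but the mechanism is identical.
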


\begin{proof}
Vertex $x_1$ shares at most 2 common neighbors with $x_0$ in $X_1$ (possibly $x_0'$ or $x_0''$). Thus it must have at least $8$ out of $9$ neighbors in $X_2$ adjacent to $x_0$. By symmetry, the claim holds for $x_2$.
\end{proof}

\begin{lemma}
Each vertex in $X[X_2^{-0}]$ that is not adjacent to any of the vertices in $\{x_1,x_2\}$ has degree $t \leq 2$ and it has $t$ neighbors in $\{x_0',x_0''\}$. Vertices (at most two) in $X_2^{-0}$ that are adjacent to exactly one of $x_1$ and $x_2$ have degree $t-1$ in $X[X_2^{-0}]$ and $t\geq 1$ neighbors in $\{x_0',x_0''\}$. If there exists a vertex in $X_2^{-0}$ that is adjacent to both in $x_1$ and $x_2$, then it is adjacent to both vertices in $\{x_0',x_0''\}$ and has degree $0$ in $X[X_2^{-0}]$.
\end{lemma}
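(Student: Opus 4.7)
The plan is to fix an arbitrary $v\in X_2^{-0}$ and read off all three cases from a short linear system obtained by the double-counting arguments already used throughout the section. Since $v\in X_2$, the vertex $v$ has exactly $2$ neighbors in $K_4$ and, by Lemma~\ref{lem:b}, no neighbor in $X_0=\{x_0\}$. I split the remaining $30$ neighbors of $v$ as follows: $a=[v\sim x_1]$ and $b=[v\sim x_2]$; $c=|N(v)\cap\{x_0',x_0''\}|$ and $d=|N(v)\cap(X_1\setminus\{x_0',x_0''\})|$; finally $e=|N(v)\cap N(x_0)\cap X_2|$ and $f=|N(v)\cap X_2^{-0}|$. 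The key observation is that $f$ is precisely the degree of $v$ in $X[X_2^{-0}]$, which is what the lemma is about.

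Three equations then drop out. The degree count gives $a+b+c+d+e+f=30$. Since $v\not\sim x_0$ we have $|N(v)\cap N(x_0)|=\mu=16$, and by Lemma~\ref{lem:b} the set $N(x_0)$ is exactly $\{x_1,x_2,x_0',x_0''\}\cup(N(x_0)\cap X_2)$, which rewrites this as $a+b+c+e=16$. Finally, I double-count $2$-paths from $v$ to $K_4$: on the $K_4$-side the total is $2\lambda+2\mu=52$, while summing over $N(v)$, the two $K_4$-neighbors of $v$ contribute $3$ each, each of $x_1,x_2$ when adjacent to $v$ contributes $3$, vertices of $X_1\cap N(v)$ contribute $1$ each, and vertices of $X_2\cap N(v)$ contribute $2$ each, giving $6+3(a+b)+(c+d)+2(e+f)=52$.

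Combining the three equations (subtract twice the second from the third and use the first) collapses everything to the single clean relation $f=c-(a+b)$. The three cases of the lemma now follow by specialising to $a+b\in\{0,1,2\}$: if $a=b=0$ then $f=c\in\{0,1,2\}$, so the degree of $v$ in $X[X_2^{-0}]$ equals the number of its neighbors in $\{x_0',x_0''\}$; if exactly one of $a,b$ equals $1$ then $f=c-1$, and $f\geq 0$ forces $c\geq 1$, matching the statement with $t=c$; if $a=b=1$ then $c\geq 2$, leaving only $c=2$ and $f=0$. There is no real obstacle in this argument; the only thing that requires a little care is remembering, when counting $2$-paths, that the two $K_4$-neighbors of $v$ are themselves neighbors of $v$ and each contribute $3$ to the count, since otherwise the final equation would come out off by $6$.
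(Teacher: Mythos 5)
Your proof is correct and follows essentially the same route as the paper: the two key counts are identical (the $\mu=16$ common-neighbour condition between $v$ and $x_0$, and the $2\lambda+2\mu$ double count of $2$-paths from $v$ to $K_4$), and your unified relation $f=c-(a+b)$ reproduces exactly the paper's three case-by-case computations (your $e+f=16-2(a+b)$ gives the $16$, $14$, $12$ neighbours in $X_2$ that the paper derives separately). The only difference is presentational: you solve one linear system once rather than redoing the count in each of the three cases.
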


\begin{proof}
Let $v\in X_2^{-0}$. Notice that $v$ must have $16$ common neighbors with $x_0$. First, assume it is not adjacent to $x_1$ or $x_2$. Then their common neighbors can only be in $\{x_0',x_0''\}$, say $t$ of them, and in $X_2\setminus X_2^{-0}$. By double counting 2-paths from $v$ to $K_4$ we obtain that $v$ has 16 neighbors in $X_2$. Thus $t$ of them must be in $X_2^{-0}$.

Second, assume that $v$ is adjacent to exactly one of the $x_1,x_2$. Then it has $16-1-t$ neighbors in $X_2\setminus X_2^{-0}$. On the other hand, by double counting, its degree in $X[X_2]$ is $14$. Thus it's degree in $X[X_2^{-0}]$ is $t-1$.

Finally, if $v$ is adjacent to $x_1$ and $x_2$, it has degree $12$ in $X[X_2]$, thus all this neighbors have to be in $X_2\setminus X_2^{-0}$ and it also has to be adjacent to both vertices in $\{x_0',x_0''\}$. 
\end{proof}

\begin{lemma}
Each of the vertices $x_0',x_0''$ has $15-t$ neighbors in $X_2^{-0}$, where $t\in \{1,2\}$ is the number of its neighbors in $\{x_1,x_2\}$. Moreover, $x_0'$ and $x_0''$ are not adjacent.
\end{lemma}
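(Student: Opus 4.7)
The plan is to prove both statements in one stroke by deriving a formula for $|N(y)\cap X_2^{-0}|$ with $y \in \{x_0', x_0''\}$, that depends on the unknown indicator $s \in \{0,1\}$ for whether $x_0' \sim x_0''$; the size constraint $|X_2^{-0}|=14$ together with $\lambda=10$ will then force $s=0$, collapsing the formula to $15-t$.

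First, I apply the $2$-path count from $y$ to $K_4$ used repeatedly in this section. Since $y\in X_1$ has one neighbor in $K_4$, is adjacent to $x_0$, to $t$ vertices of $\{x_1,x_2\}$, and has its remaining neighbors in $X_1\cup X_2$, equating the total $\lambda + 3\mu = 58$ with the contribution by class of the middle vertex yields $|N(y)\cap X_2| = 25-2t$ (and $|N(y)\cap X_1| = 5+t$).

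Next, using $y\sim x_0$, I split the $\lambda = 10$ common neighbors of $x_0$ and $y$ along the partition $N(x_0) = \{x_0',x_0''\} \cup (N(x_0)\cap X_2) \cup \{x_1,x_2\}$ (of sizes $2$, $28$, $2$ by Lemma~\ref{lem:b}). Writing $c = |N(y)\cap N(x_0)\cap X_2|$, this gives $s + c + t = 10$, hence $|N(y)\cap X_2^{-0}| = (25-2t) - c = 15 - t + s$. Since this quantity is at most $|X_2^{-0}| = 14$, I get $s \le t-1$; this already pins down $t\in\{1,2\}$ and shows that the case $s=1$ would force $t=2$ at \emph{both} $x_0'$ and $x_0''$.

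The crux is ruling out $s=1$. Were $s=1$, the inequality above would be tight at both $x_0'$ and $x_0''$, so $X_2^{-0}$ would lie inside $N(x_0')\cap N(x_0'')$, giving at least $14$ common neighbors of $x_0'$ and $x_0''$. But $s=1$ means $x_0'\sim x_0''$, so the two must share exactly $\lambda=10<14$ neighbors — a contradiction. Hence $s=0$, which at once yields both $|N(y)\cap X_2^{-0}| = 15 - t$ and the non-adjacency of $x_0'$ and $x_0''$.
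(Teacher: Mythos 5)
Your proposal is correct and follows essentially the same route as the paper: the same $2$-path count gives $|N(y)\cap X_2|=25-2t$, the same split of the $\lambda=10$ common neighbors of $y$ and $x_0$ yields $|N(y)\cap X_2^{-0}|=15-t+s$, and the same comparison with $|X_2^{-0}|=14$ rules out adjacency (the paper gets $>10$ common neighbors directly by inclusion--exclusion, while you first force $t=2$ and tightness under $s=1$; this is only a cosmetic difference). As a side note, your computation also confirms that the paper's intermediate expression ``$24-2t-(10-t-s)$'' contains a typo for $25-2t-(10-t-s)$.
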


\begin{proof}
By double counting $2$-paths from $x_0'$ to $K_4$ we have that $x_0$ has $25-2t$ neighbors in $X_2$. Vertices $x_0$ and $x_0'$ have $10$ common neighbors. Let $s$ be equal to $1$ if  $x_0'$ and $x_0''$ are adjacent and $0$ otherwise. $x_0$ and $x_0'$ must have $10-t-s$ common neighbors in $X_2$, thus $x_0'$ has $24-2t-(10-t-s)=15-t+s$ neighbors in $X_2^{-0}$. Similar holds for $x_0''$ and since $|X_2^{-0}|=14$, $x_0'$ and $x_0''$ have more than 10 common neighbors. Thus they are not adjacent and $s=0$. The lemma holds.
\end{proof}

The above lemmas give enough structure to be able to computationally obtain a small list of graphs that interlace $X$ and at least one of them must be induced subgraph of $X$, provided that $X$ contains a $K_4$ with this configuration. See Table \ref{table:sageprog} and \cite{GitHub} for the Sage program.

\begin{prop} \label{prop:case126422}
There are $3597$ graphs of the form $K_4\cup \{x_0,x_0',x_0'',x_1,x_2\}\cup X_{2}^{-0}$ that interlace $X$.
\end{prop}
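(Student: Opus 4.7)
The plan is to enumerate all non-isomorphic graphs on the $23$ vertices $K_4 \cup \{x_0, x_0', x_0'', x_1, x_2\} \cup X_2^{-0}$ that are consistent with the structural lemmas just established, and then to discard those whose partitioned adjacency matrix fails to interlace the spectrum of $X$.

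First, I would fix the adjacencies on the $9$-vertex ``core'' $K_4 \cup \{x_0, x_0', x_0'', x_1, x_2\}$. Here almost everything is forced: each of $x_1, x_2$ is joined to three of the four vertices of $K_4$; $x_0$ is joined to no vertex of $K_4$ but, by Lemma \ref{lem:b}, to both of $x_1, x_2$ and to both $x_0', x_0''$; by Lemma \ref{lem:tab1} the pair $x_1, x_2$ is non-adjacent; by the last lemma of the subsection, $x_0'$ and $x_0''$ are non-adjacent, and each of them is joined to $t\in\{1,2\}$ vertices of $\{x_1,x_2\}$, with at least one edge incident to each $x_i$ by Lemma \ref{lem:d}. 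Up to relabeling vertices of $K_4$ and swapping $x_0' \leftrightarrow x_0''$ and $x_1 \leftrightarrow x_2$, this yields only a short list of core configurations; in each one, the number of neighbors of $x_0'$ (resp.\ $x_0''$) inside $X_2^{-0}$ is pinned to $15-t$ by the last lemma.

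Next, I would extend by the $14$ vertices of $X_2^{-0}$ one at a time. Each such vertex $v$ is joined to exactly two vertices of $K_4$, is non-adjacent to $x_0$, and has its adjacency to $\{x_0',x_0'',x_1,x_2\}$ tightly coupled to its degree inside $X[X_2^{-0}]$ by the two structural lemmas: either $v$ avoids $\{x_1,x_2\}$ and has $X[X_2^{-0}]$-degree equal to its number of $\{x_0',x_0''\}$-neighbors; or $v$ is joined to exactly one of $\{x_1,x_2\}$ (at most two such vertices exist) and that degree drops by one; or $v$ is joined to both $x_1,x_2$, forcing adjacency to both of $x_0',x_0''$ and $X[X_2^{-0}]$-degree zero. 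These local constraints, combined with the global $\mu=16$ and $\lambda=10$ common-neighbor identities and with the fixed $X_2^{-0}$-degrees of $x_0',x_0''$, drastically restrict the possible ``type'' of each new vertex and the possible edges inside $X_2^{-0}$.

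Algorithmically, I would realize this as a recursive enumeration rooted at each core configuration: add vertices of $X_2^{-0}$ one by one, branching over each admissible type and each admissible adjacency pattern to the previously placed vertices of $X_2^{-0}$, and pruning at every node by (i) the local degree constraints above, (ii) the $\lambda,\mu$ conditions checked whenever enough of a pair's neighborhoods have been placed, and, crucially, (iii) the partitioned interlacing criterion of Proposition \ref{Prop-PartitionedAM}. Duplicate graphs arising from the symmetries of the core are eliminated by putting each surviving graph into a canonical form before counting. The main obstacle I anticipate is not correctness but runtime: on $14$ vertices of $X_2^{-0}$ the naive branching tree is huge, and the tightness of the partitioned-matrix interlacing test is what must keep the search feasible and eventually yield the stated $3597$ survivors, which are then reported by the corresponding Sage program recorded in Table \ref{table:sageprog}.
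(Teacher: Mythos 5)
Your proposal matches the paper's approach: the proposition is established by a computer enumeration (the Sage program \texttt{Case126422.sage} recorded in Table \ref{table:sageprog}) that builds all graphs consistent with the structural lemmas of the subsection, prunes by the partitioned interlacing criterion of Proposition \ref{Prop-PartitionedAM}, and removes isomorphic copies before counting. Your description of the forced core on $K_4\cup\{x_0,x_0',x_0'',x_1,x_2\}$ and of the type constraints on the vertices of $X_2^{-0}$ correctly reflects the lemmas the paper feeds into that search, so this is essentially the same argument, spelled out in more algorithmic detail.
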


\subsection{Case $(2,23,45,1)$}

Let $X_0 = \{x_0,x_1\}, X_1, X_2, X_3 = \{x_3\}$ be the  sets of vertices having $0,1,2$, and $3$ neighbors in $K_4$, respectively. Again, we start by proving certain structural claims about this configuration.

\begin{lemma} 
$x_0 \not \sim x_1$.
\end{lemma}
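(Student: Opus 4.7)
The plan is a short double-counting argument on the $2$-paths from $x_0$ into $V(K_4)$, deriving a direct numerical contradiction from the assumption $x_0 \sim x_1$. First I would record that, since $x_0 \in X_0$ is non-adjacent to every vertex of $K_4$, the strong regularity of $X$ yields $\mu = 16$ common neighbors between $x_0$ and each of the four vertices of $K_4$, for a total of $4 \cdot 16 = 64$ paths of length two from $x_0$ into $V(K_4)$.

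Next I would partition the $32$ neighbors of $x_0$ according to how many vertices of $K_4$ they are adjacent to, setting $n_i = |N(x_0) \cap X_i|$ for $i \in \{0,1,2,3\}$ (all neighbors of $x_0$ lie outside $V(K_4)$ since $x_0 \in X_0$). A vertex of $X_i$ closes exactly $i$ of the $2$-paths above, giving the two identities $n_0+n_1+n_2+n_3 = 32$ and $n_1+2n_2+3n_3 = 64$. Under the hypothesis $x_0 \sim x_1$, the only possible neighbor of $x_0$ in $X_0$ is $x_1$, so $n_0 = 1$ and the remaining $31$ neighbors must account for all $64$ paths. But $|X_3|=1$ forces $n_3 \le 1$, so at most one of those $31$ neighbors can contribute $3$ to the path count while each of the others contributes at most $2$; the maximum possible total is therefore $3 + 2\cdot 30 = 63 < 64$, a contradiction. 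Hence $x_0 \not\sim x_1$.

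The argument is purely arithmetic once the path-counting identities are set up, so there is no real obstacle. The crucial observation is simply that the hypothetical edge $x_0 x_1$ ``wastes'' one of the $32$ neighbors of $x_0$ on a vertex contributing zero $2$-paths to $K_4$, and that the tiny sizes $|X_0|=2$ and $|X_3|=1$ leave no slack in the remaining count. Note that the analogous attempt in Case $(1,26,42,2)$ (Lemma \ref{lem:tab1}) failed for exactly the opposite reason: there $|X_3|=2$ gives enough flexibility in the path-counting identity that a simple bound no longer suffices, forcing the authors to invoke the interlacing computer check.
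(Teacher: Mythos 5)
Your proof is correct and is essentially the paper's own argument: the paper's one-line proof observes that under the assumption $x_0 \sim x_1$ the number of $2$-paths from $x_0$ to $K_4$ is at most $3 + 2\cdot 30 = 63$, while strong regularity forces exactly $4\cdot 16 = 64$. You have simply spelled out the same counting bound (one neighbor wasted on $X_0$, at most one neighbor in $X_3$, the rest contributing at most $2$) in more detail.
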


\begin{proof}
 If $x_0 \sim x_1$, then the number of $2$-paths from $x_0$ to $K_4$ is at most $3+2\cdot 30$. But since $x_0$ is not adjacent to any vertex of $K_4$, it should have precisely $4\cdot 16$ $2$-paths to it.
\end{proof}

\begin{lemma} 
$x_0 \sim x_3$ and $x_1 \sim x_3$. 
\end{lemma}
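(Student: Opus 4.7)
The plan is to mirror the double-counting argument used in Lemma \ref{lem:b}. Since $x_0$ is not adjacent to any vertex of $K_4$ and $\mu=16$, there are exactly $4 \cdot 16 = 64$ paths of length two from $x_0$ to $K_4$. Write $k, t, s$ for the number of neighbors of $x_0$ in $X_3$, $X_1$, $X_2$ respectively; since $x_0 \not\sim x_1$ by the previous lemma, these account for all $32$ neighbors of $x_0$. The two equations $k+t+s=32$ and $3k+t+2s=64$ combine to give $k=t$, so $x_0$ has either $0$ or $1$ neighbors in $X_3=\{x_3\}$ and the same number in $X_1$. The analogous identity holds for $x_1$, with corresponding quantities $k',t',s'$.

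The main step is ruling out $k=0$. If $x_0 \not\sim x_3$, then $k=t=0$ and all $32$ neighbors of $x_0$ lie in $X_2$, forcing the $16$ common neighbors of $x_0$ and $x_1$ to lie in $X_2$ as well. We have $k' \in \{0,1\}$ and hence $s' = 32 - 2k' \geq 30$. Since $|X_2| = 45$, inclusion-exclusion inside $X_2$ yields
\[
|N(x_0) \cap N(x_1)| \;=\; |N(x_0) \cap N(x_1) \cap X_2| \;\geq\; 32 + s' - 45 \;\geq\; 17,
\]
contradicting $\mu = 16$. Hence $x_0 \sim x_3$, and the symmetric argument applied to $x_1$ gives $x_1 \sim x_3$.

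I do not expect a genuine obstacle: once the identity $k=t$ is in hand, the only subtle point is recognizing that all common neighbors of the non-adjacent pair $x_0, x_1$ are trapped in the $45$-element set $X_2$, which makes inclusion-exclusion immediately overshoot $\mu=16$. This case is in fact a cleaner variant of Lemma \ref{lem:b}, where the corresponding contradiction needed input from several of the sets $X_i$ simultaneously.
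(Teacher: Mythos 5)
Your proof is correct and is essentially the paper's own argument: the same double count of $2$-paths gives $k=t$, and the same observation that all $32$ neighbors of $x_0$ would lie in $X_2$ leads to the same inclusion--exclusion contradiction with $|X_2|=45$ (the paper phrases it as $|N_0\cup N_1|\geq 46>45$ rather than $|N_0\cap N_1|\geq 17>16$, which is the identical computation). No gaps.
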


\begin{proof}
  Suppose $x_0$ is not adjacent to $x_3$ and let $N_0,N_1$, respectively, be the sets of neighbors of $x_0,x_1$ in $X_2$. By double counting $2$-paths to $K_4$ from $x_0$ or $x_1$ we have $|N_0| = 32$ 
  while $|N_1|= 32-2t$ where $t \in \{0,1\}$ depending on whether $x_1$ is adjacent to $x_3$ or not. Since all the neighbors of $x_0$ are in $X_2$, we have $|N_0 \cap N_1| = 16$. But this implies $|N_0 \cup N_1| = 48-2t \geq 46$ which is not possible since $X_2$ has size $45$.
\end{proof}

\begin{lemma} 
Vertices $x_0$ and $x_1$ have precisely one neighbor in $X_1$. In particular, the two neighbors are distinct.
\end{lemma}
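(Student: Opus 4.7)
The plan is as follows. First I would establish the numerical part via a double-counting argument on $2$-paths from $x_0$ to $K_4$. Since $x_0$ has no neighbor in $K_4$, the total number of such paths equals $4\mu=64$. Writing $t_0$ for the number of neighbors of $x_0$ in $X_1$ and $s_0$ for the number in $X_2$, and recalling from the preceding lemmas that $x_0\sim x_3$ and $x_0\not\sim x_1$, I would obtain
\[
1+t_0+s_0=32 \qquad \text{and} \qquad 3+t_0+2s_0=64,
\]
which forces $t_0=1$ and $s_0=30$. The symmetric computation gives that $x_1$ also has exactly one neighbor in $X_1$.

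For distinctness I would argue by contradiction, in the spirit of Lemma~\ref{lem:tab1}. Suppose $x_0$ and $x_1$ share the same unique neighbor $y\in X_1$, and consider the induced subgraph $H$ on the eight vertices $V(K_4)\cup\{x_3,x_0,x_1,y\}$. Almost every edge of $H$ is already determined: $K_4$ is a clique; $x_3$ is joined to some three vertices of $K_4$, say $v_1,v_2,v_3$; $x_0,x_1$ are nonadjacent to $V(K_4)$; by the previous lemmas and by assumption we have $x_0\sim x_3$, $x_1\sim x_3$, $x_0\sim y$, $x_1\sim y$, $x_0\not\sim x_1$; and $y$ is adjacent to exactly one vertex of $K_4$. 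The only remaining freedom is (i) whether $y$'s unique neighbor in $K_4$ is $v_4$ or (using the symmetry among $v_1,v_2,v_3$) $v_1$, and (ii) whether $y\sim x_3$. This yields at most four isomorphism types of $H$.

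For each candidate I would form the partitioned adjacency matrix $A_{\mathcal{V}}$ with parts
\[
\{v_1\},\{v_2\},\{v_3\},\{v_4\},\{x_3\},\{x_0\},\{x_1\},\{y\},\;V(X)\setminus V(H),
\]
where the singleton-to-singleton entries are fixed by the candidate while the entries involving the last block are determined by $32$-regularity together with $|V(X)|=75$. Proposition~\ref{Prop-PartitionedAM} then requires the eigenvalues of the resulting $9\times 9$ matrix to interlace the spectrum $32^{(1)},2^{(56)},(-8)^{(18)}$ of $X$, and a short computer check should verify that none of the four candidates satisfies this condition, delivering the contradiction.

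The main obstacle is purely computational: enumerating the four cases correctly and computing the spectrum of each non-symmetric quotient matrix. Should some candidate unexpectedly pass interlacing, the natural fallback is to enlarge $H$ by a common neighbor of $x_0$ and $x_1$ in $X_2$ (of which there are $14$, since $|N(x_0)\cap N(x_1)|=\mu=16$ and $x_3,y$ account for two) and repeat the test, but the precedent of Lemma~\ref{lem:tab1} strongly suggests that the eight-vertex check already suffices.
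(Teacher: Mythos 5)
Your first paragraph is exactly the paper's argument: the same double count of $2$-paths from $x_0$ to $K_4$ (using $x_0\sim x_3$, $x_0\not\sim x_1$) gives $t_0=1$ and $s_0=30$, and symmetrically for $x_1$. That part is fine.

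The distinctness step is where your proposal diverges, and it is a genuine gap rather than just a different route: you reduce the claim to an interlacing check on four $8$-vertex candidates and then assert that ``a short computer check should verify'' that all four fail. That outcome is not established -- quotient matrices of such small subgraphs frequently do satisfy interlacing, and you yourself hedge by preparing a fallback -- so as written the key step is a plan contingent on an unperformed computation, not a proof. What makes this avoidable is that the numbers you already computed settle the matter combinatorially, which is how the paper argues. Let $N_0,N_1$ be the neighborhoods of $x_0,x_1$ inside $X_2$. You showed $|N_0|=|N_1|=30$, and $|X_2|=45$, so inclusion--exclusion gives $|N_0\cap N_1|\ge 30+30-45=15$. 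Since $x_0\not\sim x_1$, they have exactly $\mu=16$ common neighbors in $X$, none of which lie on $K_4$; one of them is $x_3$. Hence $|N_0\cap N_1|=15$ exactly and the common neighborhood is entirely accounted for by $x_3$ and these $15$ vertices of $X_2$, leaving no room for a shared neighbor in $X_1$. (Equivalently: a shared neighbor $y\in X_1$ would force at least $1+1+15=17>16$ common neighbors.) Your own fallback paragraph contains all the ingredients for this -- you note $|N(x_0)\cap N(x_1)|=16$ with $x_3,y$ accounting for two and $14$ left for $X_2$ -- but $14<15$ is precisely the contradiction, so no further interlacing test is needed.
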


\begin{proof}
 Let $t$ be the number of neighbors of $x_0$ in $X_1$. By counting $2$-paths from $x_0$ to $K_4$ we have $$ 16 \cdot 4 = 3 + t + 2 \cdot (32-1-t)\,,$$ which gives that $t = 1$, and $x_0$ has $30$ neighbors in $X_2$. Same holds for $x_1$. Let again $N_0,N_1$ be the sets of neighbors of $x_0,x_1$ in $X_2$. Then $|N_0 \cup N_1|\leq 45$, thus $|N_0 \cap N_1|\geq 15$. Since $x_0$ and $x_1$ have a common neighbor $x_3$, $|N_0 \cap N_1|= 15$ and $|N_0 \cup N_1| = 45$. This implies also that $x_0$ and $x_1$ cannot have a common neighbor in $X_1$.
\end{proof}

The last two lemmas now imply that $X_2$ can be partitioned into sets $X_2^0, X_2^{\{0,1\}}, X_2^{1}$ each of size $15$ such that every vertex in $X_2^{i}$ is adjacent to $x_i$ and not adjacent to $x_{1-i}$ for $i = 0,1$ and every vertex in $X_2^{0,1}$ is adjacent to both $x_0$ and $x_1$.
Let us denote the neighbors of $x_0,x_1$ in $X_1$ by $x_0'$ and $x_1'$ respectively.

\begin{lemma}\label{lem:e}
If $x_3$ is adjacent to $x_1'$, then it has 1 neighbor in $X_2^0$, otherwise it has no neighbor in $X_2^0$.
\end{lemma}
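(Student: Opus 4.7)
The plan is to set up a small linear system in the neighbor counts of $x_3$ across the partition $X_2 = X_2^{0} \sqcup X_2^{\{0,1\}} \sqcup X_2^{1}$ and then read off the answer by comparing it to the two strong-regularity identities at the edges $x_0x_3$ and $x_1x_3$.

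First I would pin down the distribution of $N(x_3)$ between $X_1$ and $X_2$ via the usual $2$-path double count from $x_3$ to $K_4$. Since $x_3 \in X_3$ has $3$ neighbors in $K_4$, the count gives $3\lambda + \mu = 3\cdot 10 + 16 = 46$ on one side. On the other side, the $3$ neighbors of $x_3$ inside $K_4$ contribute $3$ each, the two vertices $x_0, x_1 \in X_0$ contribute $0$, and if $x_3$ has $k$ neighbors in $X_1$ it has $27 - k$ in $X_2$, contributing $k + 2(27 - k)$. Solving $9 + k + 2(27-k) = 46$ gives $k = 17$, hence $|N(x_3) \cap X_2| = 10$. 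Writing $\alpha := |N(x_3) \cap X_2^0|$, $\beta := |N(x_3) \cap X_2^{\{0,1\}}|$, $\gamma := |N(x_3) \cap X_2^1|$, this yields the total-count equation $\alpha + \beta + \gamma = 10$.

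Next I would apply $|N(x_0) \cap N(x_3)| = \lambda = 10$. By the preceding lemmas the partition was set up so that $N(x_0) = \{x_3, x_0'\} \cup X_2^0 \cup X_2^{\{0,1\}}$ — every vertex of $X_2^0 \cup X_2^{\{0,1\}}$ is adjacent to $x_0$ by definition, and $x_0'$ is its unique $X_1$-neighbor. So with the indicators $a := [x_3 \sim x_0']$ and $b := [x_3 \sim x_1']$, the common-neighbor count of $x_0$ and $x_3$ decomposes as $a + \alpha + \beta = 10$. The symmetric identity at the edge $x_1 x_3$ — using $N(x_1) = \{x_3, x_1'\} \cup X_2^1 \cup X_2^{\{0,1\}}$ — gives $b + \gamma + \beta = 10$.

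Subtracting the total-count equation from each of the two strong-regularity equations yields $a = \gamma$ and, crucially, $b = \alpha$. Thus $|N(x_3) \cap X_2^0| = b \in \{0,1\}$, which is exactly the stated dichotomy. The only real obstacle is a bookkeeping one: one must carefully cite the preceding structural lemmas to justify that $N(x_0)$ and $N(x_1)$ decompose cleanly along the partition of $X_2$, with $x_0', x_1'$ as their respective unique $X_1$-neighbors. Once that identification is in place, the rest is a three-variable linear system that collapses in one line.
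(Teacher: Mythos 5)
Your proof is correct and follows essentially the same route as the paper's: a $2$-path double count to $K_4$ giving $|N(x_3)\cap X_2|=10$, followed by the common-neighbour count along the edge $x_1x_3$ (whose contributions all lie in $\{x_1'\}\cup X_2^{\{0,1\}}\cup X_2^1$), which forces $|N(x_3)\cap X_2^0|$ to equal the indicator of $x_3\sim x_1'$. The additional equation at the edge $x_0x_3$ is redundant for this lemma but harmless.
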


\begin{proof}
$x_3$ has $10$ neighbors in $X_2$ by double counting of $2$-paths to $K_4$. On the other hand, it must have $10$ common neighbors with $x_1$. Notice that the common neighbors can only be in $X_2 \cup \{x_1'\}$. If $x_3$ is adjacent to $x_1'$, then it must have $9$ neighbors in $X_2^{0,1} \cup X_2^1$ thus $1$ in $X_2^0$. On the other hand, if $x_3$ is not adjacent to $x_1'$, then it must have all $10$ neighbors in $X_2^{0,1} \cup X_2^1$ and no neighbor in $X_2^0$.
\end{proof}

\begin{lemma}
The graph $X[X_2^0]$ has maximal degree $2$. If $v \in X_2^0$ has degree $2$, then it is not adjacent to $x_3$ but it is with $x_1'$, if it has degree $1$, it is adjacent with either both $x_3$ and $x_1'$ or none of them, and if it has degree $0$, it is adjacent to $x_3$ and not adjacent with $x_1'$.
\end{lemma}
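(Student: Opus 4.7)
The plan is to fix an arbitrary $v\in X_2^0$ and derive a closed-form expression for its degree $c$ in $X[X_2^0]$ in terms of two $0/1$ indicators: $\epsilon_1$ (whether $v\sim x_1'$) and $\epsilon_3$ (whether $v\sim x_3$). Since $v$ lies in $X_2^0$, it has exactly two neighbors in $K_4$, exactly one neighbor in $X_0$ (namely $x_0$), and $\epsilon_3$ neighbors in $X_3$. Writing $a,b,c,d$ for the number of neighbors of $v$ in $X_2^{0,1}$, $X_2^1$, $X_2^0$, and $X_1$ respectively, summing over the partition of $V(X)$ and using $32$-regularity gives the first equation $d+a+b+c=29-\epsilon_3$.

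For a second equation I would count $2$-paths from $v$ to $K_4$ in two ways, as has been done repeatedly in this section. The strong-regularity side yields $2\lambda+2\mu=2\cdot 10+2\cdot 16=52$ such paths, while counting by the partition class of the intermediate vertex (the two $K_4$-neighbors of $v$ contributing $3$ each, plus $d\cdot 1+(a+b+c)\cdot 2+\epsilon_3\cdot 3$ from outside $K_4$) gives $6+d+2(a+b+c)+3\epsilon_3=52$. Subtracting the first equation from this produces $a+b+c=17-2\epsilon_3$.

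To finish, I would apply the $\mu=16$ condition to the non-adjacent pair $(v,x_1)$. By the preceding lemmas, every neighbor of $x_1$ in $X$ lies in $\{x_1'\}\cup X_2^{0,1}\cup X_2^1\cup\{x_3\}$, so the number of common neighbors of $v$ and $x_1$ equals $\epsilon_1+a+b+\epsilon_3=16$. Subtracting this from $a+b+c=17-2\epsilon_3$ yields the clean identity
\[ c \;=\; 1+\epsilon_1-\epsilon_3. \]
Evaluating the four cases $(\epsilon_1,\epsilon_3)\in\{0,1\}^2$ then reproduces every clause of the lemma: $c\le 2$ always, $c=2$ precisely when $\epsilon_1=1$ and $\epsilon_3=0$, $c=0$ precisely when $\epsilon_1=0$ and $\epsilon_3=1$, and $c=1$ precisely when $\epsilon_1=\epsilon_3$. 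The whole argument is careful bookkeeping of strong-regularity counts; the one point needing care is verifying that the listed sets truly exhaust $N(x_1)$, which follows from the partition of $X_2$ into $X_2^0,X_2^{0,1},X_2^1$ together with the earlier fact that $x_1'$ is the unique neighbor of $x_1$ in $X_1$.
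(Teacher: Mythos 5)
Your proof is correct and is essentially the paper's own argument: the paper likewise counts $2$-paths from $v$ to $K_4$ to get $r=17-2t$ neighbors of $v$ in $X_2$ (your $a+b+c=17-2\epsilon_3$) and then applies the $\mu=16$ condition to the pair $(v,x_1)$ to conclude that the degree of $v$ in $X[X_2^0]$ is $1+\epsilon_1-\epsilon_3$. The only cosmetic difference is that you separate the degree-sum identity from the path count, whereas the paper folds it into a single equation.
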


\begin{proof}
Pick a vertex $v\in X_2^0$ and let $s\in \{0,1\}$ indicate if it is adjacent with $x_1'$ and $t\in \{0,1\}$ indicate if it is adjacent with $x_3$. Let $r$ be the number of neighbors of $v$ in $X_2$. We count $2$-paths from $v$ to $K_4$:
$$2\cdot 10 +2 \cdot 16 = 2\cdot 3 + 3t + 2r + (32-2-t-r-1) \,. $$
Thus $r=17-2t$. Vertices $v$ and $x_1$ must have $16$ common neighbors. This implies that the number of neighbors of $v$ in $X_2^{0,1} \cup X_2^1$ is $16-t-s$. Hence we have that $v$ has $(17-2t)-(16-t-s)=1-t+s$ neighbors in $X_2^0$ and hence the lemma follows.
\end{proof}

By generating graphs with the established structure (see Table \ref{table:sageprog} and \cite{GitHub}) we infer that none of them interlaces $X$.

\begin{prop}\label{prop:case223451}
No graph of the form  $X_2^0\cup \{x_0,x_1,x_1',x_3\} \cup K_4$ interlaces $X$.
\end{prop}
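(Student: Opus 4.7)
The plan is to enumerate every induced subgraph on the 23 vertices $K_4\cup\{x_0,x_1,x_1',x_3\}\cup X_2^0$ consistent with all of the structural constraints derived in this subsection, and then to verify by computer that none of them satisfies the partitioned-interlacing criterion of Proposition~\ref{Prop-PartitionedAM} with respect to the spectrum of $X$. This mirrors the strategy behind Proposition~\ref{prop:case126422}; the difference here is that we expect the resulting list of interlacing configurations to be empty.

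First I would record all forced adjacencies: $K_4$ is a $4$-clique; $x_0$ and $x_1$ have no neighbour in $K_4$; $x_3$ is joined to three specific vertices of $K_4$; $x_1'$ is joined to exactly one vertex of $K_4$; $x_0\not\sim x_1$, $x_3\sim x_0$, $x_3\sim x_1$, $x_1'\sim x_1$, $x_1'\not\sim x_0$; and every $v\in X_2^0$ is joined to $x_0$, not joined to $x_1$, and joined to exactly two vertices of $K_4$. I would then branch on the two binary parameters of the case analysis: whether $x_3\sim x_1'$, and which of the four vertices of $K_4$ is the unique $K_4$-neighbour of $x_1'$.

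Next, for each $v\in X_2^0$ I would choose its two neighbours in $K_4$ (six possibilities) together with its adjacencies to $x_3$ and to $x_1'$. Lemma~\ref{lem:e} caps the number of $v\in X_2^0$ adjacent to $x_3$ at $0$ or $1$ according to the first branch, and the final structural lemma of the subsection forces $\deg_{X[X_2^0]}(v) = 1 - [v\sim x_3] + [v\sim x_1']$. The remaining freedom is then the choice of a graph on $X_2^0$ realising this prescribed degree sequence; since the maximum degree is $2$ such a graph is a disjoint union of paths and cycles. To keep the enumeration tractable I would exploit the symmetries of $K_4$ and identify vertices of $X_2^0$ with identical external adjacency profiles, and I would apply the partitioned-interlacing test incrementally so that partial extensions can be discarded as soon as a violation is forced.

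The principal obstacle is the combinatorial size of the search. Even with degrees inside $X_2^0$ bounded by $2$ and an almost-forced degree sequence, the number of path-and-cycle covers on $15$ vertices combined with the many possible external adjacency patterns is substantial. The expectation, based on the analogous (successful) search behind Proposition~\ref{prop:case126422}, is that the partitioned interlacing condition is sharp enough to kill the overwhelming majority of branches very early, so that once the structural forcing above has been fully exploited the residual search terminates with no surviving candidate, thereby establishing the proposition.
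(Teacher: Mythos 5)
Your proposal matches the paper's approach: the paper's proof is precisely a computer enumeration (the Sage program \emph{Case223451.sage}) of all graphs consistent with the structural lemmas of that subsection, pruned incrementally by the partitioned interlacing condition, and the verification that no candidate survives. Your list of forced adjacencies and the reduction of the remaining freedom to the $K_4$-attachment patterns, the $x_3$/$x_1'$ adjacencies, and a bounded-degree graph on $X_2^0$ is exactly the structure the paper exploits.
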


\subsection{Case $(0,29,39,3)$}

Let $X_1,X_2$ and $X_3 = \{x_0,x_1,x_2\}$ be the respective subsets of vertices of $X$. There are three non-isomorphic ways to introduce $3$ vertices to $K_4$ by joining each vertex to 3 vertex of $K_4$. Each such graph $G_1,G_2,G_3$ can be uniquely described by a tuple $\vec{n} = (n_1,n_2,n_3,n_4)$ counting the number of edges from the $i'$th vertex of $K_4$ to $X_3$. By relabeling the vertices of $K_4$ if needed we obtain three tuples $(0,3,3,3), (1,3,3,2)$ and $(2,2,2,3)$. We proceed by establishing certain structural claims about this configuration, for the simple implementation see Table \ref{table:sageprog} and \cite{GitHub}.

\begin{lemma}  \label{lem:tab9}
The vertices of $X_3$ form an independent set.
\end{lemma}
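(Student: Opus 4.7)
The plan is a proof by contradiction: assume, up to relabelling, that $x_0 \sim x_1$, and split on the three possible configurations $G_1, G_2, G_3$ of the bipartite incidence between $X_3$ and $K_4$ already enumerated above the lemma.

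First I would handle the subcases in which the adjacent pair $x_0, x_1$ shares all \emph{three} of their $K_4$-neighbors, using the fact (recorded just before Section 3.1) that every vertex outside a $5$-clique of $X$ has precisely two neighbors in it. In such a subcase the three common neighbors together with $x_0, x_1$ form a $K_5$ whose vertex set meets $V(K_4)$ in a triangle; the fourth vertex $v$ of $K_4$ then lies outside this $K_5$, is adjacent to the three other $K_4$-vertices, and is non-adjacent to both of $x_0, x_1$ (since each $x_i \in X_3$ has exactly three $K_4$-neighbors and neither counts $v$ among them), yielding $v$ three neighbors in $K_5$ and the desired contradiction. This argument settles every choice of adjacent pair in $G_1 = (0,3,3,3)$, as well as the pair $x_1 x_2$ in $G_2 = (1,3,3,2)$ (the one sharing three common $K_4$-neighbors in the standard labelling).

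The remaining subcases are those where $x_0, x_1$ share only \emph{two} common neighbors in $K_4$: the pair $x_0 x_1$ in $G_2$ and any pair in $G_3 = (2,2,2,3)$. Here no $K_5$ is automatically produced, so the plan parallels the proof of Lemma~\ref{lem:tab1}: enumerate, up to isomorphism, every induced graph on $V(K_4) \cup X_3$ compatible with the chosen configuration, the assumed edge $x_0 x_1$, and the four possibilities for the adjacencies of $x_2$ with $\{x_0, x_1\}$, and then apply the partitioned interlacing criterion of Proposition~\ref{Prop-PartitionedAM}, with each of the seven named vertices forming its own part and the remaining $68$ vertices of $V(X) \setminus (V(K_4) \cup X_3)$ forming the last part (whose internal edge count is fixed by regularity). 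The main obstacle I anticipate is that on only seven vertices the interlacing criterion may fail to kill some candidates outright; if so, the natural remedy is to extend each surviving candidate by a single vertex of $X_2$, whose edge-counts to $K_4$ and to $X_3$ are constrained by the strong-regularity equations of Lemma~\ref{Strucutral}, and re-run the check, in the spirit of the Sage verifications catalogued in Table~\ref{table:sageprog}.
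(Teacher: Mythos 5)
Your proposal is correct, and for the cases it cannot settle by hand it falls back on exactly the verification the paper performs: the paper's entire proof of Lemma~\ref{lem:tab9} is the single computational assertion that no way of introducing edges among the vertices of $X_3$ in $K_4 \cup X_3$ yields a graph satisfying the interlacing condition (the program recorded for this lemma in Table~\ref{table:sageprog}). What you add is a genuine analytic shortcut for the subcases in which the assumed adjacent pair of $X_3$ shares all three of its $K_4$-neighbours: there the pair together with its three common neighbours forms a $K_5$, and the fourth vertex of $K_4$ has exactly three neighbours on that $K_5$, contradicting the solution vector $(0,0,70,0,0,0)$ obtained from Lemma~\ref{Strucutral} with $H = K_5$, a fact recorded in the paper before the case analysis and depending only on Lemma~\ref{Strucutral}, so there is no circularity. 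This correctly disposes of all of $G_1=(0,3,3,3)$ and of the distinguished pair in $G_2=(1,3,3,2)$. For the remaining subcases (pairs sharing only two common $K_4$-neighbours, i.e.\ the other pairs of $G_2$ and every pair of $G_3=(2,2,2,3)$) you rightly defer to the partitioned interlacing check of Proposition~\ref{Prop-PartitionedAM}; the paper's computation shows the seven-vertex check already suffices, so your contingency of extending by a vertex of $X_2$ turns out to be unnecessary. The trade-off is that the paper's proof is a one-line appeal to an exhaustive computation, whereas your hybrid version is longer to state but reduces the number of configurations that must be certified by machine.
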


\begin{proof} 
No matter how we introduce edges among the vertices of $X_3$ in the graph $K_4 \cup X_3$ we do not obtain a graph interlacing $X$. 
\end{proof}

\begin{lemma}\label{lem:aa} Every vertex $x\in \{x_0,x_1,x_2\}$ has $21$ neighbors in $X_1$ and $8$ neighbors in $X_2$. 
\end{lemma}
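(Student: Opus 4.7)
The plan is to apply the double-counting of $2$-paths technique that has been used repeatedly in the preceding cases. Fix $x \in \{x_0,x_1,x_2\}$ and count, in two ways, the number of paths of length $2$ joining $x$ to a vertex of $K_4$.

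On the one hand, $x$ has exactly $3$ neighbors and $1$ non-neighbor in $K_4$, so by strong regularity the total number of such $2$-paths equals $3\lambda + \mu = 3\cdot 10 + 16 = 46$.

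On the other hand, classify the $2$-paths by their intermediate vertex $y$. By Lemma \ref{lem:tab9} the set $X_3$ is independent, so $x$ has no neighbor in $X_3$; since the case assumption is $(0,29,39,3)$, every vertex of $X$ lies in $K_4 \cup X_1 \cup X_2 \cup X_3$, so the $32$ neighbors of $x$ split as $3$ in $K_4$, some number $t$ in $X_1$, and $29-t$ in $X_2$. An intermediate $y \in K_4$ contributes $3$ (it sees the other three vertices of $K_4$), $y \in X_1$ contributes $1$, and $y \in X_2$ contributes $2$. Summing yields $3\cdot 3 + t + 2(29-t) = 67-t$ $2$-paths.

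Equating $67-t = 46$ forces $t=21$, hence $x$ has $21$ neighbors in $X_1$ and $8$ in $X_2$, as claimed. There is no genuine obstacle here: the lemma is a direct bookkeeping consequence of Lemma \ref{Strucutral}'s counting philosophy once Lemma \ref{lem:tab9} has ruled out intermediates in $X_3$.
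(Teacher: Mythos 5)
Your proof is correct and follows essentially the same route as the paper: both count $2$-paths from $x$ to $K_4$ as $3\lambda+\mu=46$ on one side and as $3\cdot 3+t+2(29-t)$ on the other, solving for $t=21$. Your explicit appeal to Lemma \ref{lem:tab9} to rule out neighbors in $X_3$ is a detail the paper leaves implicit, but the argument is identical.
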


\begin{proof} Let $x$ have $k$ neighbors in $X_1$ and $l=32-3-k$ neighbors in $X_2$. We count the number of paths of length $2$ from $x$ to $K_4$. We have $3\cdot 10+16=3\cdot 3+k+2 \cdot (32-k-3)$ and thus $k=21$ and $l=8$.
\end{proof}

Let $X_2^0,X_2^1, X_2^3$ be the neighbors in $X_2$ of $x_0,x_1,x_2$ respectively. We have proved that  $|X_2^0| = |X_2^1| = |X_2^2| = 8$. Notice that the sets $X_2^0,X_2^1,X_2^2$ need not be disjoint.

\begin{lemma}\label{lem:ab} It holds $ |X_2^0 \cap X_2^1 |, | X_2^0 \cap X_2^2|,| X_2^1 \cap X_2^2|  \in \{0,1\}$.
\end{lemma}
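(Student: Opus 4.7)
The plan is to use a three-way decomposition of the common neighbors of two vertices in $X_3$. Fix distinct $x_i,x_j\in X_3$. By Lemma \ref{lem:tab9} the set $X_3$ is independent, so in particular $x_i\not\sim x_j$ and strong regularity forces them to share exactly $\mu=16$ common neighbors in $V(X)$. Since $X_3$ is independent, no common neighbor lies in $X_3$, so these 16 neighbors are distributed among $K_4$, $X_1$, and $X_2$. The common neighbors in $X_2$ are precisely $X_2^i\cap X_2^j$ by definition, so it suffices to show the other two parts together account for at least 15.

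For the $K_4$ part, each of $x_i,x_j$ has exactly $3$ neighbors on the $4$-vertex clique $K_4$, so inclusion--exclusion gives
$$|N_{K_4}(x_i)\cap N_{K_4}(x_j)|\;\ge\;3+3-4\;=\;2.$$
For the $X_1$ part, Lemma \ref{lem:aa} states that each vertex of $X_3$ has $21$ neighbors in $X_1$, while $|X_1|=29$ (from the configuration vector $(0,29,39,3)$). Inclusion--exclusion now yields
$$|N_{X_1}(x_i)\cap N_{X_1}(x_j)|\;\ge\;21+21-29\;=\;13.$$
Adding these two lower bounds gives at least $15$ common neighbors outside $X_2$, so at most $16-15=1$ common neighbors in $X_2$, which is the desired bound $|X_2^i\cap X_2^j|\le 1$.

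There is no real obstacle here beyond verifying the two inclusion--exclusion inputs: the degree 3 of each $x_i$ on $K_4$ (by definition of $X_3$), the degree 21 in $X_1$ (Lemma \ref{lem:aa}), and the sizes $|K_4|=4$ and $|X_1|=29$. The argument does not need to distinguish the three non-isomorphic ways of attaching $X_3$ to $K_4$, since the bound $|N_{K_4}(x_i)\cap N_{K_4}(x_j)|\ge 2$ holds uniformly in all three cases. The conclusion applies symmetrically to each of the three pairs $\{x_0,x_1\},\{x_0,x_2\},\{x_1,x_2\}$, giving the claim.
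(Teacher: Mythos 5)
Your proof is correct and follows essentially the same route as the paper: count the $16$ common neighbors of a non-adjacent pair in $X_3$, show at least $2$ lie on $K_4$ and at least $13$ lie in $X_1$ by inclusion--exclusion, and conclude at most $1$ remains for $X_2$. Your write-up is slightly more explicit than the paper's (you also note that $X_3$ contributes no common neighbors because it is independent), but the argument is the same.
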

\begin{proof}
Vertices $x_0$ and $x_1$ have $16$ common neighbors, at least $2$ of them are on $K_4$. Each of $x_0,x_1$ has $21$ neighbors in $X_1$, where $|X_1|=29$. Thus they must have at least $13$ common neighbors in $X_1$. The latter implies that they have at most one common neighbor in $X_2$. Same holds for all other pairs from the assertion of the lemma.
\end{proof}

\begin{lemma}
For $i \in \{1,2\}$ the graph $X[X_2^0 \setminus X_2^i]$ is triangle-free.
\end{lemma}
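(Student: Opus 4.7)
The approach is to derive a contradiction by assuming a triangle exists and then showing via an interlacing check that the forced local structure around it cannot embed into $X$.

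Specifically, suppose $\{u,v,w\}$ is a triangle in $X[X_2^0 \setminus X_2^i]$; by the symmetry between $x_1$ and $x_2$ I may take $i=1$. Then $u,v,w$ are pairwise adjacent, all joined to $x_0$, and none joined to $x_1$. Since $u,v,w\in X_2$, each has exactly two neighbors on $K_4$. Combining this with Lemma \ref{lem:tab9}, which gives $x_0\not\sim x_1$, together with the constraint that $x_0,x_1$ each have three neighbors on $K_4$ following one of the three tuples $(0,3,3,3),(1,3,3,2),(2,2,2,3)$ listed at the start of this subsection, essentially every edge of the nine-vertex induced subgraph
$$H := X[V(K_4) \cup \{x_0, x_1, u, v, w\}]$$
is pinned down up to finitely many choices.

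The key step is to enumerate those remaining choices, namely which two vertices of $K_4$ play the role of the two $K_4$-neighbors of each of $u,v,w$, combined with the three configurations for $x_0,x_1$, and to verify using the partitioned interlacing criterion of Proposition \ref{Prop-PartitionedAM} that no resulting $9$-vertex graph interlaces $X$. This is a modest Sage computation directly in the spirit of Lemma \ref{lem:tab9}, to be recorded as an additional entry in Table \ref{table:sageprog}.

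The main obstacle I anticipate is that the nine-vertex subgraph might still admit some interlacing configuration; if so, the remedy is to enlarge $H$ to ten vertices by adjoining $x_2$. Here Lemma \ref{lem:ab} provides the additional rigidity $|X_2^0 \cap X_2^2|\leq 1$, so at most one of $u,v,w$ is adjacent to $x_2$, and $x_2$ itself contributes three neighbors on $K_4$ consistent with the chosen tuple; these extra constraints should suffice to eliminate every surviving candidate.
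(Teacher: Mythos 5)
Your proposal diverges from the paper's argument, and as written it is a plan for a computation rather than a proof: you yourself flag that the nine-vertex check ``might still admit some interlacing configuration'' and that the ten-vertex fallback ``should suffice,'' but neither claim is established. There is also concrete reason to expect the computation to fail. Your subgraph $H$ contains the $4$-clique $\{x_0,u,v,w\}$ together with the vertex $x_1$ adjacent to none of its vertices, i.e.\ a $4$-clique with $b_0\geq 1$; the paper's own earlier case analysis shows that exactly such configurations are \emph{not} eliminated by the partitioned interlacing criterion even on much larger induced subgraphs ($3597$ interlacing $23$-vertex graphs survive in case $(1,26,42,2)$, and $18089$ in case $(0,29,39,3)$), and are only disposed of downstream via star complements and clique numbers of comparability graphs. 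A $9$- or $10$-vertex interlacing check is therefore very unlikely to be conclusive, and adjoining $x_2$ does not change that picture.

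The paper's proof needs no new computation. Since $u,v,w\in X_2^0=N(x_0)\cap X_2$, the set $\{x_0,u,v,w\}$ is a $4$-clique, and $x_i$ is adjacent to none of its vertices: it misses $u,v,w$ because they lie in $X_2^0\setminus X_2^i$, and it misses $x_0$ by Lemma \ref{lem:tab9}. If this $4$-clique extends to a $5$-clique, then $x_i$ has at most one neighbor on that $5$-clique, contradicting the fact that every vertex off a $5$-clique has exactly two neighbors on it. If it does not extend, one is back in a previously analyzed configuration of a $4$-clique with a vertex adjacent to none of its vertices, which the overall case analysis rules out. You should adopt this structural reduction; your computational route at best reproduces it with substantial extra work and at worst never reaches a contradiction.
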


\begin{proof}
Assume that there exists a triangle in $X[X_2^0 \setminus X_2^i]$. Together with $x_0$ it forms a $K_4$. Assume this $K_4$ does not extend to a $K_5$. Vertex $x_i$ is not adjacent to any of the vertices of this $K_4$. Thus we have have a $4$-clique with some vertices that are not adjacent to it. We have already covered this configuration before and shown that it is not feasible. On the other hand, if $K_4$ extends to a $K_5$, we have a $K_5$ and a vertex that is adjacent to at most one vertex on it. Again, this is not possible, since every vertex not on $K_5$ must have precisely two neighbors on $K_5$. 
\end{proof}

Let $X_1^{-0}$ denote the subgraph of $X_1$ induced on all the vertices not adjacent to $x_0$ and let $X_1^{i}$ be the set of vertices in $X_1$ adjacent to $x_i$ for $i\in \{0,1,2\}$.

\begin{lemma}\label{lem:ac} 
The graph $X_1^{-0}$ has 8 vertices. At most one of the vertices in $X_1^{-0}$ is not adjacent to $x_1$ and at most one is not adjacent to $x_2$. Moreover, the graph on $X_1^{-0}\setminus X_1^i$, for $i\in \{1,2\}$, has no triangles. Each vertex $v\in X_1^{-0}$ has degree $k$ in $X_1^{-0}$ at most 3 and is adjacent to precisely $9-t-m+k$ vertices in $X_2^0$, where $t\in \{0,1,2\}$ is the number of vertices adjacent to $v$ in $\{x_1,x_2\}$ and $m\in \{0,1\}$ the number of common neighbors of $v$ and $x_0$ on $K_4$.  
\end{lemma}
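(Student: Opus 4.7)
The plan is to establish the four assertions of Lemma \ref{lem:ac} in turn, using the structural facts already in hand (Lemmas \ref{lem:aa}, \ref{lem:ab}, and \ref{lem:tab9}) together with the standard double-counting and 2-path arguments employed throughout this section. The first assertion is immediate: by Lemma \ref{lem:aa} the vertex $x_0$ has exactly $21$ neighbors in $X_1$, so $|X_1^{-0}| = 29 - 21 = 8$.

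For the second assertion I would decompose the $\mu = 16$ common neighbors of the non-adjacent pair $(x_0, x_i)$, $i \in \{1,2\}$, by location. Since each of $x_0, x_i$ is adjacent to $3$ of the $4$ vertices of $K_4$, they share at least $2$ common neighbors on $K_4$; they share none in $X_3$ by Lemma \ref{lem:tab9}; and at most one in $X_2$ by Lemma \ref{lem:ab}. Hence they share at most $16 - 2 = 14$ common neighbors in $X_1$. Since $x_i$ has $21$ neighbors in $X_1$ by Lemma \ref{lem:aa}, at least $21 - 14 = 7$ of them lie in $X_1 \setminus N(x_0) = X_1^{-0}$, leaving at most one vertex of $X_1^{-0}$ non-adjacent to $x_i$.

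For the fourth assertion I would combine three linear equations on the neighborhood of $v \in X_1^{-0}$. Writing $a = |N(v) \cap X_1^0|$, $b = |N(v) \cap X_2^0|$, and $c = |N(v) \cap (X_2 \setminus X_2^0)|$: the count of common neighbors of the non-adjacent pair $(v, x_0)$ gives $m + a + b = 16$ (the contribution from $X_3$ is zero by Lemma \ref{lem:tab9}); counting 2-paths from $v$ to $K_4$ gives $3 + 3t + 2(b+c) + (a+k) = \lambda + 3\mu = 58$, using middle-vertex contributions of $3, 3, 2, 1$ from vertices in $K_4, X_3, X_2, X_1$ respectively; and the degree equation reads $1 + t + a + b + c + k = 32$. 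Solving this linear system yields $b + c = 24 - 2t$ and the desired $b = 9 - t - m + k$. The bound $k \leq 3$ then follows from $b \leq |X_2^0| = 8$ together with $t \leq 2$ and $m \leq 1$.

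The main obstacle will be the triangle-free assertion, for which I would argue by contradiction: assuming $T = \{v_1, v_2, v_3\}$ is a triangle in $X_1^{-0} \setminus X_1^i$, all adjacencies of the induced subgraph on $K_4 \cup \{x_0, x_i, v_1, v_2, v_3\}$ are either determined (the triangle $T$, the non-adjacency of every $v_j$ to $\{x_0, x_i\}$, the non-adjacency of $\{x_0, x_i\}$ via Lemma \ref{lem:tab9}, and the three possible $X_3$-to-$K_4$ patterns $(0,3,3,3), (1,3,3,2), (2,2,2,3)$) or range over a small finite set of choices (the specific $K_4$-neighbor of each $v_j$ and the few remaining undetermined edges within $X_3$ and between $T$ and $X_3$). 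I would then enumerate these finitely many candidates and check by computer, in the style of Lemmas \ref{lem:tab9} and \ref{lem:tab1}, that none interlaces $X$, yielding the required contradiction.
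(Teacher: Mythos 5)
Your first, second, and fourth assertions are handled essentially exactly as in the paper, and correctly: $|X_1^{-0}|=29-21=8$; the bound of at most $16-2=14$ common neighbours of the non-adjacent pair $(x_0,x_i)$ inside $X_1$ forces at least $7$ of the $21$ neighbours of $x_i$ in $X_1$ into the $8$-element set $X_1^{-0}$; and your three linear relations are the same two counts the paper uses (the $2$-path count giving $a+k=7+t$, i.e.\ the paper's $j=7+t$, and the $\mu$-count for $(v,x_0)$ giving $m+a+b=16$), from which $b=9-t-m+k$ and in fact $k\le 2$ follow.

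The gap is in the triangle-free assertion. You replace an argument by an enumeration of $9$-vertex candidates pruned only by interlacing, and you give no reason to expect that \emph{every} candidate fails the interlacing condition; at this size interlacing is a weak necessary condition (the paper itself later finds thousands of much larger graphs that do interlace), so the proposed computation may simply return a nonempty list and prove nothing. The paper does not argue this way: it reduces a hypothetical triangle to a $4$-clique together with an external vertex ($x_i$) having no neighbour on it, which is impossible because the configurations $(3,20,48,0)$, $(1,26,42,2)$, $(2,23,45,1)$ have already been excluded, and a $K_4$ inside a $K_5$ cannot have an external vertex with fewer than two neighbours on the $K_5$. (The paper's phrase ``forms a $K_4$ with $x_0$'' really fits the preceding lemma on $X_2^0\setminus X_2^i$; for $X_1^{-0}$, whose vertices are non-neighbours of $x_0$, the clean route is different.) In fact the easiest repair is already in your own write-up: a vertex $v$ of a triangle in $X_1^{-0}\setminus X_1^i$ has $k\ge 2$ and $t\le 1$ (it is not adjacent to $x_i$), so your formula gives $b=9-t-m+k\ge 9>8=|X_2^0|$, a contradiction. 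You should replace the proposed computation with this one-line deduction (or with the paper's reduction to the earlier cases); as it stands, that step is an unverified computational claim on which the lemma would stand or fall.
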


\begin{proof}
By Lemma \ref{lem:aa}, $|X_1^0|=21$, thus $|X_1^{-0}|=8$. For $i\in \{1,2\}$, $x_i$ and $x_0$ share at least 2 neighbors on $K_4$. Since they are non-adjacent, they share 16 neighbors, thus at most 14 in $X_1$. This implies $|X_1^0\cap X_1^i|\leq 14$, thus  $|X_1^0\cup X_1^i| \geq 21+21-14=28$. Since $|X_1|=29$ we see that there exist at most one vertex in $X_1$ that is not adjacent to $x_0$ and to $x_i$.

If there is a triangle in the graph induced by $X_1^{-0}\setminus X_1^i$, this triangle forms a $K_4$ with $x_0$, while $x_i$ is not adjacent to any of its vertices. If this $4$-clique is not a part of a $5$-clique the assertion follows since this case has already been dealt with (a $K_4$ with some vertices not adjacent to it). On the other hand, if this $K_4$ is a part of a $K_5$, we have an induced subgraph of $K_5$ together with a vertex that is adjacent to one or none of the vertices on $K_5$. Again, this is not possible since every vertex not on $K_5$ has precisely two neighbors in $V(K_5)$. Hence $X_1^{0,2}$ is indeed triangle-free. 

Let now $v\in X_1^{-0}$ be as in the lemma. Denote with $j$ the number of its neighbors in $X_1$. By counting 2-paths to $K_4$ we get
$$10+3\cdot 16= 3+3t+j+2(32-1-t-j),$$
hence $j=7+t$. Denote with $l$ the number of neighbors of $v$ in $X_2^0$. Vertex $v$ and $x_0$ have 16 common neighbors, thus
$$16=(j-k)+l+m=(7+t-k)+l+m,$$
from which we get $9-t-m+k=l\leq 8$. This implies also that $k\leq 2$.
\end{proof}

\begin{lemma}\label{lem:notiang} 
Let $v$ be a vertex of $X[X_2^0]$ with degree $k$,  $t\in\{0,1,2\}$ neighbors in $\{x_1,x_2\}$, and $m\in\{1,2\}$ the number of common neighbors of $v$ and $x_0$ on $K_4$. Then:
$$k+m+t\leq 3.$$
In particular $k\leq 2$.
\end{lemma}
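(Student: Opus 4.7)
The plan is to argue by contradiction: assume $k+m+t\geq 4$ and produce an auxiliary $4$-clique whose behavior is forbidden, using the same dichotomy that was deployed in the preceding triangle-free lemma. Throughout I write $K_4=\{v_1,v_2,v_3,v_4\}$, $v_{j_0}$ for the unique vertex of $K_4$ not adjacent to $x_0$, and, as $v\in X_2^0$ has exactly two neighbors on $K_4$, I label these $\{v_a,v_b\}$. By definition $m=2$ iff $v_{j_0}\notin\{v_a,v_b\}$ and $m=1$ iff $v_{j_0}\in\{v_a,v_b\}$.

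\textbf{Case $m=2$.} Then $v_a,v_b$ are also neighbors of $x_0$, so $T=\{v,x_0,v_a,v_b\}$ is a $4$-clique. I consider the dichotomy of whether $T$ extends to a $5$-clique. If it does, the $5$th vertex $w$ satisfies $w\sim v_a,v_b$ hence has $\geq 2$ neighbors on $K_4$, so $w\in K_4\cup X_2\cup X_3$; the $K_4$ option is killed by $v\not\sim v_c,v_d$ (where $\{v_c,v_d\}=K_4\setminus\{v_a,v_b\}$), and the $X_3$ option is killed by Lemma~\ref{lem:tab9} (independence of $\{x_0,x_1,x_2\}$). So $w\in X_2^0$ with $N_{K_4}(w)=\{v_a,v_b\}$ and $w\sim v$, forcing $k\geq 1$. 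Next consider the auxiliary $4$-clique $T'=\{x_0\}\cup(K_4\cap N(x_0))=\{x_0,v_a,v_b,v_d\}$ (where $v_d$ is the third neighbor of $x_0$ on $K_4$). $T'$ cannot extend to a $K_5$: a 5th vertex would have $\geq 3$ neighbors on $K_4$ hence lie in $X_3\cup K_4$, but $v_{j_0}\not\sim x_0$ rules out $K_4$ and $X_3$-independence rules out $x_1,x_2$. So $T'$ is a $K_4$-not-in-$K_5$. Its $X_3$-analogue contains $v_{j_0}$ (neighbors $v_a,v_b,v_d$), $v$ (neighbors $v_a,v_b,x_0$; not $v_d$ since $v$'s only $K_4$-neighbors are $v_a,v_b$), and $w$ (neighbors $v_a,v_b,x_0$; and $w\not\sim v_d$ is forced because otherwise $T'$ would extend to a $K_5$ through $w$). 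These three vertices include the edge $v\sim w$, so the $X_3$-analogue of $T'$ is not independent. Since $b_3\geq 3$ forces $T'$ to be in configuration $(0,29,39,3)$, this contradicts Lemma~\ref{lem:tab9} applied to $T'$. If instead $T$ does not extend to a $K_5$, then $T$ is itself a $K_4$-not-in-$K_5$, and the neighbor counts of the $k$ vertices $y_i\in N(v)\cap X_2^0$, of $x_1,x_2$, and of $v_c,v_d$ on $T$ violate the four admissible $(b_0,b_1,b_2,b_3)$ solutions — the cases $(3,20,48,0),(2,23,45,1)$ already being excluded and $(1,26,42,2),(0,29,39,3)$ being ruled out by reproducing the $v\sim w$/Lemma~\ref{lem:tab9} conflict above.

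\textbf{Case $m=1$.} Here $v_{j_0}\in\{v_a,v_b\}$; rename so that $v_a=v_{j_0}$ and $v_b$ is the unique $K_4$-neighbor of $v$ common with $x_0$. The assumption becomes $k+t\geq 3$. A $K_4$ containing both $v$ and $x_0$ now requires two adjacent vertices in their common neighborhood; one such vertex is $v_b$ on $K_4$, and for a second one I use a neighbor $y\in N(v)\cap X_2^0$ (one of the $k\geq 1$ such neighbors guaranteed when $k+t\geq 3$ with $t\leq 2$) that is adjacent to $v_b$. If no such $y$ is adjacent to $v_b$, I instead use a pair $y_i\sim y_j$ inside $N(v)\cap X_2^0$, or an $X_1$-common neighbor, to build $T$. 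In every sub-case the same dichotomy applies: either the resulting $4$-clique extends to a $K_5$ whose fifth vertex plus Lemma~\ref{lem:tab9} on the $x_i$'s outside this $K_5$ (each required to have exactly 2 neighbors on it) yields numerical contradictions, or it is a $K_4$-not-in-$K_5$ falling into one of the already-excluded configurations.

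\textbf{Main obstacle.} The delicate part is the case $m=1$: there is no canonical $4$-clique built directly out of $v,x_0$ and two $K_4$-vertices, so one must carefully pick two adjacent common neighbors of $\{v,x_0\}$ from $\{v_b\}\cup (N(v)\cap X_2^0)\cup (N(v)\cap X_1)$; verifying that at least one such adjacency exists when $k+t\geq 3$, and tracking the resulting $(b_0,b_1,b_2,b_3)$-profile of the auxiliary clique, is where the bookkeeping is heaviest. The particular bound $k\leq 2$ then follows immediately from $k+m+t\leq 3$ together with $m\geq 1$ and $t\geq 0$.
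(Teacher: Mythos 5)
Your proposal does not follow the paper's argument, and as written it has genuine gaps. The most telling symptom is in your case $m=2$: nowhere do you use the hypothesis $k+m+t\geq 4$, so if the branch worked it would prove that $m=2$ is impossible outright --- a strictly stronger statement than the lemma, which explicitly permits $m=2$ (with $k+t\leq 1$). The actual failure point is the sub-case where $T=\{v,x_0,v_a,v_b\}$ does not extend to a $K_5$. There you assert that the configurations $(1,26,42,2)$ and $(0,29,39,3)$ for $T$ are ``ruled out by reproducing the $v\sim w$ conflict,'' but (i) there is no fifth vertex $w$ in this branch, so the conflict cannot be reproduced; (ii) to invoke Lemma~\ref{lem:tab9} for $T$ you would need three vertices with exactly three neighbours on $T$, two of them adjacent, and only one such vertex (the third $K_4$-neighbour of $x_0$) is guaranteed; and (iii) configuration $(1,26,42,2)$ is \emph{not} excluded at this point of the paper --- it merely produces the list of $3597$ candidates of Proposition~\ref{prop:case126422}, which are only eliminated much later by the star-complement computation, so ``$T$ is a $K_4$ not in a $K_5$'' is not yet a contradiction. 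The case $m=1$ is worse: your entire construction needs two adjacent common neighbours of $v$ and $x_0$ outside $K_4$, and you never show such a pair exists under $k+t\geq 3$ (e.g.\ $k=1$, $t=2$ leaves a single vertex of $N(v)\cap X_2^0$ with no reason to be adjacent to $v_b$, and the $X_1$ fallback is unsubstantiated).

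The paper's proof is a short, self-contained double count with no cliques at all. First one counts $2$-paths from $v$ to $K_4$: since $v\in X_2$ this number is $2\cdot 10+2\cdot 16$, and decomposing $N(v)$ into its two $K_4$-neighbours, $x_0$, the $t$ vertices of $\{x_1,x_2\}$, $j$ vertices of $X_1$ and the remainder in $X_2$ gives $j=15+t$. Then, since $v\sim x_0$, they have exactly $\lambda=10$ common neighbours, which must split as $m$ on $K_4$, $k$ in $X_2^0$, and $j-l$ in $X_1$ (where $l$ counts the neighbours of $v$ in $X_1^{-0}$; none lie in $X_3$ by Lemma~\ref{lem:tab9}). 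This forces $l=5+k+m+t\leq |X_1^{-0}|=8$, i.e.\ $k+m+t\leq 3$. I would recommend abandoning the clique-based contradiction scheme here and redoing the computation along these lines.
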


\begin{proof}
Let $v\in X_2^0$. Denote  with $j$ the number of neighbors of $v$ in $X_1$. By counting 2-paths from $v$ to $K_4$ we get:
$$2\cdot 10+2\cdot 16 = 2\cdot 3+1\cdot 3+ 3t +j +2(32-2-1-t-j),$$
thus $j=15+t$. Let now $l\leq 8$ be the number of neighbors of $v$ in $X_1^{-0}$. Vertices $v$ and $x_0$ have 10 common neighbors:
$$10=k+m+(j-l)=k+m+(15+t-l),$$
thus $5+k+m+t=l\leq 8$ and $k+m+t\leq 3$. Since $m\in\{1,2\}$, $k\leq 2$.
\end{proof}

By generating all graphs induced by $K_4\cup \{x_0,x_1,x_2\} \cup X_2^0 \cup X_1^{-0}$ we obtain

\begin{prop} \label{prop:case029313}
There are $18089$ non-isomorphic graphs of the form $K_4\cup \{x_0,x_1,x_2\} \cup X_2^0 \cup X_1^{-0}$ that interlace $X$.
\end{prop}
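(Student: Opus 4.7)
The plan is a backtracking enumeration that bakes in every structural lemma of this subsection, invokes the partitioned interlacing test of Proposition~\ref{Prop-PartitionedAM} as an early pruning filter, and finishes with a canonical-form computation (for example via \texttt{nauty}) to count non-isomorphic survivors.

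First I fix the base $K_4 \cup X_3$: there are three non-isomorphic cases $(0,3,3,3)$, $(1,3,3,2)$, $(2,2,2,3)$ for the distribution of edges from $X_3$ to $K_4$, and $X_3$ itself is independent by Lemma~\ref{lem:tab9}. I then adjoin the eight vertices of $X_2^0$ (size from Lemma~\ref{lem:aa}) and the eight vertices of $X_1^{-0}$ (size from Lemma~\ref{lem:ac}), assigning each of them a \emph{type}, i.e.\ its adjacency pattern to $K_4 \cup \{x_1,x_2\}$. The allowed type multiplicities are controlled by Lemma~\ref{lem:ab} on the $X_2^0$ side (at most one vertex in $X_2^0 \cap X_2^i$) and by Lemma~\ref{lem:ac} on the $X_1^{-0}$ side (at most one vertex of $X_1^{-0}$ missing each of $x_1,x_2$). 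Next I enumerate internal edges of $X[X_2^0]$, subject to vertex degrees $k\le 2$ with $k+m+t\le 3$ and the triangle-freeness of $X_2^0\setminus X_2^i$ from Lemma~\ref{lem:notiang}, and internal edges of $X[X_1^{-0}]$, subject to $k\le 3$ and the triangle-freeness of $X_1^{-0}\setminus X_1^i$ from Lemma~\ref{lem:ac}. The bipartite edges between the two sets are then essentially pinned by the degree equations: each $v\in X_2^0$ contributes exactly $5+k+m+t$ neighbours in $X_1^{-0}$ (Lemma~\ref{lem:notiang}) and each $v\in X_1^{-0}$ contributes exactly $9-t-m+k$ neighbours in $X_2^0$ (Lemma~\ref{lem:ac}), so only a bipartite realisation of the two prescribed degree sequences remains to be generated.

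At every partial configuration I form the partitioned adjacency matrix on the current vertex set together with the complement $V(X)\setminus V(H)$, and discard the branch as soon as its eigenvalues fail to interlace the spectrum $\{32,2,-8\}$ of $X$; this is the main pruning engine. Surviving complete configurations are then canonised to absorb the labelling symmetries introduced by the sixteen anonymous vertices of $X_2^0\cup X_1^{-0}$. The hard part is purely combinatorial blow-up: even after all the constraints above, the naive tree is enormous, so the branching order must prioritise the rows most tightly pinned by the bipartite degree equations, and the interlacing filter must be invoked at the earliest depth at which the quotient matrix is fully determined. Running the corresponding Sage program listed in Table~\ref{table:sageprog} then returns the claimed count of $18089$ non-isomorphic graphs of the form $K_4 \cup \{x_0,x_1,x_2\} \cup X_2^0 \cup X_1^{-0}$ that interlace $X$.
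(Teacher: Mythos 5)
Your proposal is correct and matches the paper's approach: the paper's ``proof'' is precisely a computer enumeration (the Sage program \texttt{generateFinal.sage} in Table~\ref{table:sageprog}) that builds all graphs on $K_4\cup\{x_0,x_1,x_2\}\cup X_2^0\cup X_1^{-0}$ constrained by Lemmas~\ref{lem:tab9}--\ref{lem:notiang}, prunes by the interlacing condition, and counts isomorphism classes, exactly as you describe. The only substantive caveat is that the specific count $18089$ is certified solely by executing that computation, not by any further argument --- but that is equally true of the paper itself.
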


The case analysis carried in this section resulted in a list of $21686$ non-isomorphic graphs, resulting in $6688644$ star complements and roughly $40000$ compatibility graphs. By verifying that they all have clique number smaller than $56$ we established Proposition \ref{prop:clnum}.

\begin{table}[h]
    \begin{tabular}{| l | l | l |} 
    \hline
    Claim & Program & Output \\ \hline
    Lemma \ref{lem:tab1} &  K4/126422/Claim-1.sage & / \\
    Proposition \ref{prop:case126422} & K4/126422/Case126422.sage & K4/12622/cands126422.g6 \\
    Proposition \ref{prop:case223451} & K4/223451/Case223451.sage & / \\
    Lemma \ref{lem:tab9} & K4/029393/Claim1.sage & / \\
    Proposition \ref{prop:case029313} & K4/029393/generateFinal.sage & K4/029393/cands029393.g6 \\
    Lemma \ref{lem:tab14} & K5/triangles/extendTriangle.sage & K5/triangles/candsTriag.g6 \\
    \hline
    \end{tabular}
    \caption{Sage programs constructing small induced structure. Code is available at \cite{GitHub}} \label{table:sageprog}
\end{table}

\section{Main result}
Let $K_5$ be a $5$-clique of $X$ with vertex set $\{k_1,\ldots,k_5\}$.  The only possible configuration of vertices not in $K_5$ is $(0,0,70,0,0,0)$, therefore every vertex of $X$ that is not in $K_5$ has precisely two neighbors in $K_5$. For $1\leq i<j\leq 5$, let $X_{i,j}$ be vertices in $V(X)\backslash V(K_5)$ that are adjacent to $k_i$ and $k_j$. Since $k_i$ and $k_j$ are adjacent, they must have $10$ common neighbors, 3 of them already on $K_5$. Hence $V(G) \setminus V(K_5)$ is partitioned into $10$ sets of $7$ vertices, namely $X_{0,1},X_{0,2},\ldots, X_{4,5}$. In what follows we establish structural results about these partitions.

\begin{lemma}
For any $1 \leq i < j \leq 5$ the graph $X_{i,j}$ is either $\overline{K_7}$ or $K_3 \cup \overline{K_4}$ or $K_1 \cup K_3 \cup K_3$.
\end{lemma}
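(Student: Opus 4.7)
The plan is to show that the only connected components $X_{i,j}$ can have are triangles and isolated vertices; with $|X_{i,j}|=7$ this forces $X_{i,j}$ to be one of the three listed graphs. Two ingredients drive the argument: the author has already noted that every vertex outside $V(K_5)$ has exactly two neighbors in $K_5$, and Proposition \ref{prop:clnum} guarantees that every $4$-clique of $X$ is contained in a $5$-clique.

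First I would show that every edge of $X_{i,j}$ lies in a triangle of $X_{i,j}$. If $u \sim v$ with $u, v \in X_{i,j}$, then $\{u, v, k_i, k_j\}$ is a $4$-clique, and by Proposition \ref{prop:clnum} it extends to a $5$-clique by some vertex $w$. Now $w$ is adjacent to both $k_i$ and $k_j$, so $w$ has at least two neighbors in $K_5$. It cannot belong to $V(K_5)$, because the remaining vertices $k_\ell$ of $K_5$ are not adjacent to $u$ or $v$ (the only neighbors of $u, v$ in $K_5$ are $k_i, k_j$). Applying the configuration $(0,0,70,0,0,0)$ to $w$ forces $w \in X_{i,j}$, producing a triangle $\{u, v, w\}$.

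Second I would show that every triangle $T = \{u, v, w\}$ of $X_{i,j}$ is itself a connected component of $X_{i,j}$. Indeed, $T \cup \{k_i, k_j\}$ is a $5$-clique of $X$, so any $v' \in X_{i,j} \setminus T$ has exactly two neighbors in it; but $v'$ is already adjacent to $k_i$ and $k_j$, so it has no neighbor in $T$.

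Combining the two steps, every component of $X_{i,j}$ is either an isolated vertex or a $K_3$. Because $|X_{i,j}| = 7$ and three disjoint triangles would require nine vertices, the only combinatorial possibilities are zero, one, or two triangles, giving exactly $\overline{K_7}$, $K_3 \cup \overline{K_4}$, or $K_1 \cup K_3 \cup K_3$. There is no real obstacle here: the argument is essentially immediate once one recognises that applying the $(0,0,70,0,0,0)$ configuration to both the original $K_5$ and the new $5$-clique produced by Proposition \ref{prop:clnum} simultaneously forces "edges belong to triangles" and "triangles are isolated".
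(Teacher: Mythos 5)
Your proof is correct and follows essentially the same route as the paper: use Proposition \ref{prop:clnum} to put every edge of $X_{i,j}$ inside a triangle, then use the $(0,0,70,0,0,0)$ configuration applied to the new $5$-clique to show each triangle is an isolated component, and finish by counting with $|X_{i,j}|=7$. Your justification that the extending vertex $w$ must lie in $X_{i,j}$ is slightly more explicit than the paper's, but the argument is the same.
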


\begin{proof}
Assume there exists an edge $e = \{x,y\}$ in the graph $X_{i,j}$. Then the vertices $\{x,y,k_i,k_j\}$ induce a $4$-clique. By the result of the previous section, every 4-clique is contained in a  5-clique. Clearly, the additional vertex must be in $X_{i,j}$. Hence we have proved that every edge $e$ in $X_{i,j}$ is contained in a triangle in $X_{i,j}$. Let $T$ be a triangle in $X_{i,j}$ and $v\in X_{i,j}$ a vertex not on $T$. Since $T \cup \{k_i,k_j\}$ induces a 5-clique, every vertex not on this 5-clique is adjacent to exactly 2 vertices on this clique. Since $v$ is adjacent to $k_i$ and $k_j$, it is not adjacent to $T$ and the lemma follows.
\end{proof}

As it turns out every pair of triangles in distinct partitions $X_{i,j}, X_{k,l}$ induce quite a regular structure.

\begin{lemma}\label{lem:tri1}
Let $1 \leq i  < j \leq 5$, $1 \leq k < l \leq 5$ and let $T, T'$ be two triangles of $X_{i,j}$ and $X_{k,l}$, respectively. Let $c = |\{i,j,k,l\}|$. If $c = 3$, then the edges from $T$ to $T'$ form a perfect matching. If $c = 4$, they form a complement of a perfect matching.
\end{lemma}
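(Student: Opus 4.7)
The plan is to use two facts already established: the previous lemma shows that any triangle $T$ in $X_{i,j}$ together with $k_i,k_j$ induces a $5$-clique of $X$, and the discussion at the end of Section~3 shows that every vertex of $X$ not on a given $5$-clique has exactly $2$ neighbors on it. Combining these immediately controls the bipartite edge count between $T$ and $T'$.

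First I would fix the $5$-clique $K := T \cup \{k_i,k_j\}$ and take any vertex $v \in T'$. Since $v \in X_{k,l}$, the neighbors of $v$ among $\{k_1,\ldots,k_5\}$ are exactly $\{k_k,k_l\}$, so the number of neighbors of $v$ in $K$ equals $|\{i,j\}\cap\{k,l\}|$ plus the number of neighbors of $v$ in $T$. Since $v$ cannot lie on $K$ (its intersection with $\{k_1,\ldots,k_5\}$ is wrong), the total must be $2$. Thus
\[
|N(v)\cap T| \;=\; 2 - |\{i,j\}\cap\{k,l\}|.
\]

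In the case $c=3$ this gives $|N(v)\cap T|=1$ for every $v\in T'$, and by the symmetric argument (swap the roles of $T$ and $T'$, using the $5$-clique $T'\cup\{k_k,k_l\}$) each $v\in T$ has exactly one neighbor in $T'$. A bipartite graph on $3+3$ vertices that is $1$-regular on both sides is a perfect matching, so the $c=3$ conclusion follows. In the case $c=4$ the same computation gives $|N(v)\cap T|=2$ for every $v\in T'$ and, symmetrically, $|N(v)\cap T'|=2$ for every $v\in T$; a $2$-regular bipartite graph on $3+3$ vertices is $K_{3,3}$ minus a perfect matching, which is exactly the complement of a perfect matching.

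The only nontrivial step is verifying that $T\cup\{k_i,k_j\}$ really is a $5$-clique, but that is immediate from the previous lemma (every edge in $X_{i,j}$ lies in a triangle, and a triangle together with $k_i,k_j$ is a $4$-clique which, by Proposition~\ref{prop:clnum}, extends to a $5$-clique — the extension must lie in $X_{i,j}$). Hence the main obstacle is conceptual rather than computational: one must be careful that $v\in T'$ is indeed not on the clique $K$, which is automatic because the two triangles live in different partition classes and therefore have distinct attachment to $\{k_1,\ldots,k_5\}$.
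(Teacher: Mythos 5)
Your proposal is correct and follows essentially the same argument as the paper: form the $5$-clique $T\cup\{k_i,k_j\}$, use the fact that every vertex off a $5$-clique has exactly two neighbors on it, subtract the $|\{i,j\}\cap\{k,l\}|$ neighbors among $\{k_i,k_j\}$, and apply the symmetric count from the other side. The paper's proof is just a terser version of the same computation.
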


\begin{proof}\label{lem:tri2}
First assume $c=3$. Since $T \cup \{k_i,k_j\}$ forms a 5-clique, every vertex of $T'$ is adjacent to exactly 2 vertices in this 5-clique. Since $c=3$, it must be adjacent to exactly one vertex in $T$. Similarly, every vertex of $T$ must be adjacent to exactly one vertex in $T'$. Thus, the edges from $T$ to $T'$ form a perfect matching. The case when $c=4$ is similar.
\end{proof}

Our next lemma shows that not all partitions $X_{i,j}$ contain a triangle. In fact at most 7 do. For a simple implementation used in the lemma see Table \ref{table:sageprog} and \cite{GitHub}.

\begin{lemma} \label{lem:tab14}
There are at least three distinct pairs $\{i,j\},\{k,l\},\{m,n\}$ such that $X_{i,j},X_{k,l}$ and $X_{m,n}$ are independent sets of $X$.
\end{lemma}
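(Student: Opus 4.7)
My plan is to argue by contradiction. Suppose fewer than three of the ten partitions $X_{i,j}$ are independent sets, so that at least eight of them contain a triangle. From each such partition select one triangle $T_{i,j}$, and let $Y = V(K_5) \cup \bigcup T_{i,j}$; since the partitions are pairwise disjoint, $|Y| \geq 5 + 3\cdot 8 = 29$. The goal will be to show that no realisation of $X[Y]$ satisfies the interlacing condition with respect to $X$.

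The key observation is that $X[Y]$ is determined up to very little combinatorial freedom. Within each $T_{i,j}$, and between $T_{i,j}$ and its two vertices $k_i,k_j$, the edges are completely fixed; and for any two chosen triangles $T_{i,j}$ and $T_{k,l}$, Lemma~\ref{lem:tri1} forces the bipartite graph between them to be a perfect matching when $|\{i,j\}\cap\{k,l\}|=1$ and the complement of a perfect matching when the pairs are disjoint. Thus the only choices are which perfect matching (or anti-matching) to install across each of the $\binom{8}{2}=28$ pairs of selected triangles, with each choice a member of a set of size at most $6$.

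The plan is to enumerate all such configurations computationally, extending $K_5$ one triangle at a time and pruning after each insertion using the partitioned-matrix interlacing criterion of Proposition~\ref{Prop-PartitionedAM}. This is precisely the role of the Sage program extendTriangle.sage listed in Table~\ref{table:sageprog}: starting from $K_5$, it iteratively adjoins a triangle in some chosen partition $X_{i,j}$, branches over all matching/anti-matching choices with the previously placed triangles, and discards any partial graph whose partitioned adjacency matrix fails to interlace $X$. If the search halts without producing any graph on at least eight triangles that interlaces $X$, then at most seven partitions contain a triangle, i.e.\ at least three are independent, proving the lemma.

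The main obstacle is keeping the search tree manageable, because each newly added triangle branches against every earlier triangle. The practical fix is to apply interlacing aggressively at every node rather than only at leaves, and to order the insertions so that partitions sharing a common vertex $k_m$ of $K_5$ are processed consecutively; this causes the local degree structure around $k_m$ to rapidly become over-constrained, so that the interlacing test eliminates almost all branches early.
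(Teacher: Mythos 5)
Your setup is the same as the paper's: assume for contradiction that at least eight of the ten parts $X_{i,j}$ contain a triangle, fix one triangle per part, observe that Lemma~\ref{lem:tri1} pins down the bipartite graph between any two chosen triangles to a perfect matching or its complement, and enumerate the resulting configurations computationally with interlacing used as the pruning criterion. (The paper additionally notes that, up to symmetry of $K_5$, there are only two non-isomorphic ways to choose which eight of the ten parts carry the triangles, which keeps the enumeration organized; your proposal leaves this implicit but it is a routine reduction.)

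The genuine gap is in your closing step: you conclude the lemma \emph{provided} the interlacing-pruned search terminates with no surviving configuration on eight triangles. The paper's computation shows this premise is false. One of the two choices of eight parts is indeed killed entirely by interlacing, but the other admits a configuration (one up to isomorphism) that \emph{does} interlace $X$. Eliminating it requires the full star-complement machinery: extending the surviving graph to candidate star complements of order $19$ for the eigenvalue $2$, computing the resulting $117$ comparability graphs, and verifying that none has a clique of order $56$. Your proof as written has no fallback for a configuration that passes the interlacing test, so it does not close; you need to add the comparability-graph clique check (as in Section~\ref{sec:appr}) for any survivor of the enumeration.
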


\begin{proof}
Using Lemma \ref{lem:tri1} and Proposition \ref{Prop-PartitionedAM} we wrote a Sage program generating all possible graphs on $\{k_1,\ldots,k_5\}$ and 8 triangles, each contained in a different set $X_{i,j}$, for $1\leq i<j \leq 5$. There are 2 non isomorphic ways to chose 8 sets for triangles (among 10 sets) and for each such a way there are non-equivalent ways how to choose (complements of) perfect matchings among triangles.
All the obtained graphs in one configuration do not interlace $X$, while in the other configuration only one non-isomorphic example was found giving rise to 117 compatibility graphs. None of them has a clique of order 56. Thus the lemma follows.
\end{proof}

We are now able to prove our main theorem. The lists of graphs obtained in this part are too large to be hosted online hence they are not included in Table \ref{table:sageprog}. However they can be obtained by a request to the authors.

\begin{theorem}\label{thm:main}
The graph $X$ does not exist.
\end{theorem}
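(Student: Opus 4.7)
My plan is to exploit the rigid combinatorial structure around a fixed $5$-clique of $X$, combined with the star-complement technique for the eigenvalue $r = 2$, whose multiplicity in any $(75,32,10,16)$ SRG equals $56$, so that star complements have order $19$. Since the result of Bondarenko, Prymak and Radchenko guarantees that $X$ contains at least $783$ copies of $K_4$ and since Proposition~\ref{prop:clnum} says every $K_4$ lies inside a $K_5$, I may fix such a $K_5 = \{k_1,\ldots,k_5\}$ together with the induced decomposition $V(X)\setminus V(K_5) = \bigsqcup_{1\leq i<j\leq 5} X_{i,j}$ into ten cells of size $7$. By the classification proved in this section each cell is one of $\overline{K_7}$, $K_3 \cup \overline{K_4}$, or $K_1 \cup K_3 \cup K_3$, and by Lemma~\ref{lem:tab14} at least three cells are $\overline{K_7}$.

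With this skeleton in place, I would enumerate all distributions of cell types modulo the natural $S_5$ action on $\{k_1,\ldots,k_5\}$ and, for each orbit representative, generate the possible configurations of inter-cell edges. Between two triangles sitting in different cells the edge pattern is pinned down by Lemma~\ref{lem:tri1} (a perfect matching or its complement, according to whether the two cell labels share one index or none), while the remaining inter-cell adjacencies involving isolated vertices are constrained only by the partitioned interlacing test of Proposition~\ref{Prop-PartitionedAM}. Exactly as in the four $K_4$-analyses above, I would grow these graphs by introducing new vertices one at a time, discarding any partial graph whose partitioned adjacency matrix fails to interlace the spectrum $(32,\,2^{56},\,(-8)^{18})$ of $X$; this filter proved extremely sharp in the previous section and should again collapse what is at first an exponential search.

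For every partial graph $H$ that survives the interlacing filter, I would then invoke the star-complement machinery of Section~\ref{sec:appr}: choose $\scc(H)$ of order at most $19$ not containing $2$ as an eigenvalue, extend it in all admissible ways by Proposition~\ref{Prop-SCextend} to candidate star complements $H^{\ast}$ of order exactly $19$, form the comparability graph $\comp(H^{\ast},2)$, and verify that $\omega(\comp(H^{\ast},2)) < 56$. Once every branch of the enumeration closes in this way, no star complement for $r = 2$ can be assembled inside $X$, contradicting the basic star-complement proposition and ruling $X$ out. The hard part will be this final clique test: the comparability graphs that arise are dense graphs on hundreds of vertices whose clique numbers lie well beyond off-the-shelf solvers, so the argument depends crucially on the specialised maximum-clique routine developed in Section~\ref{Comp-Aspects}, together with careful bookkeeping to ensure that no surviving configuration escapes either the interlacing sieve or the clique verification.
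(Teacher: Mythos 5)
Your overall framework matches the paper's: fix a $5$-clique (which exists by Proposition~\ref{prop:clnum}), use the partition of $V(X)\setminus V(K_5)$ into ten $7$-cells, the three-way classification of each cell, Lemma~\ref{lem:tab14}, interlacing to prune, and then star complements, comparability graphs, and the clique test. But the concrete enumeration you propose --- all distributions of cell types over the ten cells modulo $S_5$, followed by ``the possible configurations of inter-cell edges'' --- is exactly the combinatorial explosion the paper is at pains to avoid: that is an edge-enumeration over $70$ vertices, and neither the interlacing sieve nor vertex-by-vertex growth will rescue it at that scale. The missing idea is the reduction that makes the computation finite in practice: among the four non-isomorphic ways to place the three independent cells guaranteed by Lemma~\ref{lem:tab14}, every one contains two independent cells whose index pairs share exactly one clique vertex, so after relabelling one may always assume $X_{1,2}$ and $X_{2,3}$ are both $\overline{K_7}$. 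The paper then restricts attention to the single $19$-vertex induced subgraph on $\{k_1,\ldots,k_5\}\cup X_{1,2}\cup X_{2,3}$; since each of the two cells is independent, the $14$ non-clique vertices induce a bipartite graph, which can be generated exhaustively with \emph{genbg} and filtered by interlacing down to $654325$ candidates before the star-complement and clique machinery takes over. Without isolating this small, forced, bipartite seed structure, your plan has no tractable starting point, so as written the proof would not go through even though every individual tool you invoke is the right one.
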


\begin{proof}
By the previous lemma, at least 3 graphs among $X_{i,j}$, for $1\leq i<j \leq 5$, are independent sets. It is an easy check that there are 4 non-isomorphic configurations for the choice of 3 sets among $X_{i,j}$. These are: $(X_{1,2},X_{2,3},X_{4,5}),(X_{1,2},X_{2,3},X_{1,3}), (X_{1,2},X_{2,3}, X_{2,4}), (X_{1,2},X_{2,3},X_{3,4})$. Notice that in all combinations we have sets $X_{1,2},X_{2,3}$.

First we analyze the possible candidates for graphs induced by $\{k_1,\ldots,k_5\} \cup X_{1,2} \cup X_{2,3}$. We do this by generating all bipartite graphs that can represent $X_{1,2} \cup X_{2,3}$. 

This is done using McKay's program {\em genbg}. Adding the vertices $\{k_1,\ldots,k_5\}$ and removing non-interlacing graphs we end up with a list of $654325$ graphs. By computing $\scc(G)$ for every such graph $G$ and extending it to have order $19$ we end up with a list of $361547477$ star complements. By computing their respective compatibility graphs and removing isomorphisms we end up with about $100^6$ graphs. We have verified that none of these graphs has a clique of order $56$ hence implying our assertion.
\end{proof}

\section{Computational aspects}\label{Comp-Aspects}

In this section we briefly describe the computational tools and resources used to produce our result. As described in Section \ref{sec:appr}, our approach required generating a list of candidates for an induced subgraph of $X$, compute their compatibility graphs and check their clique numbers. Most programs were written and tested independently in C and Sage, however most of the computation was performed only by C programs due to their efficiency. 

\subsection{Extending graphs and computing compatibility graphs}

As described in Section \ref{sec:prelim} we generated a list of graphs $\mathcal{L}$ such that if $X$ exists then one of the graphs in $\mathcal{L}$ must be an induced subgraph of $X$. In order to rule out the existence of $X$ we had to obtain star complements for each of the graphs in $\mathcal{L}$ and check the clique number of its compatibility graphs. Some of the graphs in $\mathcal{L}$ already had star complements as induced subgraphs and were easy to handle. However some of the graphs did not, and in this case we had to find maximal induced subgraphs with no 2 as eigenvalue, and extend them to have order $19$. When choosing these subgraphs we tried to maximize the order of the automorphism group while minimizing the number of obtained subgraphs - note that two non isomorphic members of $\mathcal{L}$ may have isomorphic subgraphs. Both lists $\mathcal{L}$ and the one obtained from it are available on the GitHub page.

Let us remark that we have found out that the process of extending graphs is computationally feasible whenever the obtained subgraphs have order $17$. For otherwise we obtained far too many graphs of order $19$.

The task of extending graphs to have order $19$ was done by the already introduced program {\em extend.c} which takes as input a file with graphs given in graph6 string format and for each graph outputs all possible ways to introduce a new vertex to it so that the newly obtained graph interlaces $X$. If the input graph has a graceful pair of vertices then the extensions giving the minimal amount of graphs is written. Again, graphs are written in graph6 format.

After each iteration of {\em extend.c} we used McKay's {\em shortg} \cite{Nauty} program to remove isomorphic graphs from the obtained lists. Extending a graph of order 18 takes roughly 0.5 seconds on a standard desktop machine and the whole computation for the proof of our main took roughly 240 CPU hours.

To compute compatibility graphs we wrote a program that takes as input a list of star complements and for each output writes the graph6 representation of its compatibility graph. The program is called {\em compGraph2graph6.c} and is found on GitHub \cite{GitHub}. We have found that the average compatibility graph gets computed in 0.5 seconds and hence the instances of Theorem \ref{thm:main} were computed in about 5000 CPU hours. Let us note that the program does not output compatibility graphs with order smaller than the clique number sought - in our case $57$.

The computationally most intensive part was computing the clique number of the obtained compatibility graph. This step took roughly $150000$ CPU hours and was carried on a computational grid of $2000$ CPU's.

Both programs use the GNU GSL library for linear algebra routines and make use of the precision guaranteed by their implementation. Finally let us remark that in some of the steps we made use of the GNU parallel program \cite{Parallel}. 

% Omeni labelg in compatibility grafe

\subsection{Computing the clique number}

While it is in general hard to compute the clique number of a graph, the structure of compatibility graphs makes this task a little easier. As one may suspect by its definition, $\comp$ offers a lot of symmetry which we exploit as follows. First let us denote with $G[N(v)]$ the subgraph of $G$  induced by the neighbors of a vertex $v \in V(G)$.

Suppose we wish to compute the clique number $\omega(G)$ and let $v \in V(G)$. Then either $v$ is contained in a maximal clique of $G$ or is not. In the latter case the maximal clique of $G$ equals the maximal clique in $\comp - v$. In other words 

$$ \omega(G) = \max( \omega(G)-v, \omega(G[N(v)]))+1)\,.$$

Now, the key fact in computing $\omega(\comp)$ is that its automorphism group is fairly large and hence in computing its clique number we can remove the entire orbit $o(v)$ of a vertex $v$. Given that $o(v)$ is fairly large, the obtained graph $\comp - o(v)$ is much smaller. We need not stop here. The key property that is used in the above idea is the fact that if $u,v \in V(G)$ are in the same orbit of $\aut(G)$, then the graphs $G[N(u)]$ and $G[N(v)]$ are isomorphic. For our purposes we can define the {\em extended orbit of a graph} $G$ as a partition of $\tilde{\mathcal{O}}(G)$ of $V(G)$ such that two vertices $u,v$ are in the same part if and only if $G[N(u)] \cong G[N(v)]$. Summarizing the above ideas into pseudo code we designed Algorithm \ref{CliqueAlgorithm}. 

\begin{algorithm}
\caption{Algorithm for computing clique numbers of symmetric graphs} \label{CliqueAlgorithm}
\begin{algorithmic}[1]
\Procedure{cliqueNumber(G,c)}{}
\State $\textit{cl} \gets \textit{0}$
\While {$|V(G)| > c$} 
\State $\tilde{\mathcal{O}} \gets extendedOrbits(G)$
\If {$|\tilde{\mathcal{O}}| = |V(G)|$}
\State break
\EndIf
\State $o \gets \text{some orbit of } \tilde{\mathcal{O}}$
\State $v \gets \text{an element of } o$
\State $cltmp \gets cliqueNumber(G[v]),c)+1$
\If {$cltmp > cl$}
\State $cl \gets cltmp$
\EndIf
\State $G \gets G - o$
\EndWhile
\Return $\max(cl, cliqueNumberBruteforce(G))$
\EndProcedure
\end{algorithmic}
\end{algorithm}

In order to compute the clique number of our compatibility graphs we used a variant of Algorithm \ref{CliqueAlgorithm} which leaves out two major details. Namely the computation of the extended orbits of $G$ and the {\em cliqueNumberBruteforce} routine. For the later, we needed an established program that calculates the clique number of a graph. We have found out that on our instances the program {\em mcqd} \cite{Konc} drastically outperforms the well known clique finding algorithm {\em Cliquer} \cite{Cliquer}.  Hence whenever our input graph is small enough, we simply use {\em mcqd}.  Since we only need to determine whether our graph has a clique of size at least $56$ or not we made use of an additional optimization. Suppose we are trying to decide whether a graph $G$ has a clique of size $k$ and the greedy coloring algorithm shows that we can properly color the vertices of $G$ using less than $k$ colors. Then the clique number of $G$ is smaller than $k$ and we can stop our search. This is the essential idea behind the implementation of {\em mcqd} and we used it to obtain an even more efficient test for compatibility graphs.

The second problem of computing the extended orbits is reduced to the problem of computing the orbits of the automorphism groups and canonical forms of graphs. While the computational complexity of these two problems is not settled, it is well known that in practice both problem offer efficient practical solutions.  For example, it takes Bliss \cite{Bliss} about 5 seconds of CPU time on a standard laptop to compute the full automorphism group of a typical compatibility graph of order $6000$ and density $0.4$.

In order to compute the extended orbits of a graph $G$ we first compute the orbits $\mathcal{O}$ of its automorphism group. Finally for every representative of $\mathcal{O}$ we compute the canonical form of $G[N(v)]$ and join orbits with equal canonical forms. 

A simple implementation of the above algorithm was implemented in Sage and is available on the GitHub repository under the name {\em cliqueNumber.sage}. We used a more efficient {\em C++} implementation that we will present in a subsequent paper. 

Finally, let us remark that both {\em mcqd} and {\em Bliss} were integrated into Sage for the purposes of this paper.

\section{Final remarks}

We have shown that a $(75,32,10,16)$ SRG does not exists by presenting a classification approach based on the star complement technique. The main property that we exploited was the fact that such a SRG has an eigenvalue of high multiplicity, namely $56$ which implies that the star complement graph is small ($19$ vertices). Thus one can avoid the combinatorial explosion of constructing all possible star complements, provided that one can build large enough induced structure for the star complement graph. In our case this was established by building the star complement around a maximal clique of our SRG. Two things were crucial for our approach to work. First was the fact that many of the obtained compatibility graphs were isomorphic thus significantly reducing the number of graphs whose clique number was to be determined. The second crucial part was the fact that compatibility graphs had large automorphism groups thus allowing to exploit their symmetries when computing their clique number. We believe that a similar approach can be used to classify at least one of the following open parameters $(69,20,7,5), (95,40,12,20), (96,45,24,18), (99,42,21,15)$.  

\section*{Acknowledgements}
The authors are indebted to Nejc Trdin for kindly sharing his computational resources, Nathann Cohen for Sage development related tasks, and Sandi Klav\v{z}ar for fruitful discussions. The authors would also like to thank Jan Jona Javor\v{s}ek and Barbara Kra\v{s}ovec for their support in using the computational grid. Finally, the authors would like to thank W.H Haemers for his remark about the non-existence of a $(76,35,18,14)$ SRG.

%\section*{References}
\bibliographystyle{plain}
\bibliography{biblio}

\end{document}